\documentclass[a4paper,11pt]{article}
\usepackage[english]{babel}
\usepackage[utf8]{inputenc}
\usepackage{graphicx}
\usepackage{amsmath}
\usepackage{amssymb}
\usepackage{amsthm}
\usepackage{amsfonts}
\usepackage{array}
\usepackage{amssymb}
\usepackage{mathrsfs}  
\usepackage{mathtools}
\usepackage{enumerate}
\usepackage{enumitem}
\usepackage{comment}
\usepackage{hyperref}
\usepackage[top=2cm, bottom=2cm, left=2cm, right=2cm]{geometry}
\usepackage{graphicx}
\usepackage{comment}
\usepackage{tikz}
\usepackage{xcolor}
\usepackage{pgfplots}
\usepackage{float}
\usepackage{eurosym}

\title{\bf
Tight Bound for Sum of Heterogeneous Random Variables: Application to Chance Constrained Programming
}
\date{}

\author{Quentin Jacquet$^{1,2}$, Riadh Zorgati$^{2}$
\\[5pt]
\small $^1$ INRIA, CMAP, Ecole Polytechnique, Palaiseau, France \\
\small $^2$ EDF R\&D Saclay, Palaiseau, France \\\small\texttt{\{quentin.jacquet,riadh.zorgati\}@edf.fr}
        }

\usepackage{enumerate}
\usepackage{dsfont,amsmath,amssymb,mathtools}

\DeclareMathOperator*{\Conv}{Conv}

\DeclareMathOperator{\diag}{diag}

\DeclareMathOperator*{\argmax}{arg\,max}

\DeclareMathOperator{\bbR}{\mathbb{R}}

\newcommand{\R}{\mathbb{R}}

\DeclareMathOperator{\Prob}{\mathbb{P}}

\newcommand{\shortminus}{\scalebox{0.5}[1.0]{$-$}}

\usepackage{hyperref}
\usepackage[capitalise]{cleveref}

\newtheorem{prop}{Proposition}[section]
\newtheorem{theorem}[prop]{Theorem}

\newtheorem{lemma}[prop]{Lemma}
\newtheorem{remark}{Remark}[section]

\DeclareMathSymbol{\mlq}{\mathord}{operators}{``}
\DeclareMathSymbol{\mrq}{\mathord}{operators}{`'}

\makeatletter
\newcommand*\frob{\mathpalette\bigcdot@{.7}}
\newcommand*\bigcdot@[2]{\mathbin{\vcenter{\hbox{\scalebox{#2}{$\m@th#1\bullet$}}}}}

\usepackage[colorinlistoftodos,bordercolor=orange,backgroundcolor=orange!20,linecolor=orange,textsize=scriptsize]{todonotes}

\usepackage[font=small,justification=centering]{caption}
\usepackage[justification=centering]{subcaption}

\usepackage{textcomp}

\usepackage{algorithm}
\usepackage[noend]{algpseudocode}

\algnewcommand{\IfThenElse}[3]{
  \State \algorithmicif\ #1\ \algorithmicthen\ #2\ \algorithmicelse\ #3}

\def\namedlabel#1#2{\begingroup
    #2%
    \def\@currentlabel{#2}%
    \phantomsection\label{#1}\endgroup
}

\usepackage[sorting=nty,backend=biber,url=false,isbn=false,eprint=true]{biblatex} 
\addbibresource{./mybib.bib}

\def\Prob{\mathbb{P}}

\def\Var{\textnormal{Var}}

\def\bbE{\mathbb{E}}
\def\bmean{\overline{b}}

\def\taumin{\tau^-}

\usetikzlibrary{shapes.multipart}
\usepackage{tikz-cd}

\usepackage{setspace}
\onehalfspacing

\begin{document}
\maketitle

\begin{abstract}We study a tight Bennett-type concentration inequality for sums of heterogeneous and independent variables, defined as a one-dimensional minimization.
  We show that this refinement, which outperforms the standard known bounds, remains computationally tractable:  we develop a polynomial-time algorithm to compute confidence bounds, proved to terminate with an $\epsilon$-solution. 
From the proposed inequality, we deduce tight distributionally robust bounds to Chance-Constrained Programming problems. To illustrate the efficiency of our approach, we consider two use cases. First, we study the chance-constrained binary knapsack problem and highlight the efficiency of our cutting-plane approach by obtaining stronger solution than classical inequalities (such as Chebyshev-Cantelli or Hoeffding). Second, we deal with the Support Vector Machine problem, where the convex conservative approximation we obtain improves the robustness of the separation hyperplane, while staying computationally tractable.
\end{abstract}

~\\\textbf{Keywords:} Concentration inequalities, Chance-constrained programming, Confidence bounds, Knapsack problem, Support Vector Machine 

\section{Introduction}

Concentration inequalities -- such as Hoeffding~\cite{Hoeffding_1963}, Bennett~\cite{Bennett_1962} or McDiarmid~\cite{McDiarmid_1989} to cite a few -- were originally introduced to quantify how a random variable deviates from their expectation. The crux of the matter is the imperfect knowledge of a random process: varying between the inequalities, the only available information are about the two first moments (mean and variance) or the length of the support of the distribution. These inequalities have now a wide variety of applications, see e.g.~\cite{Boucheron_2004}, including chance constrained programming or machine learning~\cite{Nemirovski_2007,Peng_2022,Wang_2015,Khanjani_2022}.

Many refinements of Hoeffding and Bennett's inequalities have been proposed: all these works exploit Chernoff's inequality but differ in the estimation of the moment-generating function $t\mapsto\bbE[e^{tX}]$. \Cref{fig::classification_mgf} proposes a schematic classification of the literature.
From and Swift~\cite{From_2013} and Zheng~\cite{Zheng_2017} both use a linear approximation of $x\mapsto e^{tx}$, that is tighter than Hoeffding' bound~\cite{Hoeffding_1963} for variables in $[0,1]$. They differ in the use of the arithmetic-geometric mean inequality.
Jebara~\cite{Jebara_2018} exploits an inequality
from~\cite[(b)]{Bennett_1962} to derive an analytic one-sided bound for sum of heterogeneous random variables.
Finally, Cheng and Li~\cite{Cheng_2022} insert a multipoint approximation of $e^{tX}$ and compare their results with~\cite{Zheng_2017}.
We emphazise that the classification we made -- which is a contribution on its own -- focuses on the crucial approximation done while tackling with Chernoff's inequalities, and does not directly compare the final bounds obtained in each work.

\begin{figure}[!ht]
\centering
\begin{tikzcd}[arrows=Rightarrow,
  column sep=small, row sep=small,
  /tikz/execute at begin picture={
    \draw[rounded corners, dotted,thick] (-3, -.75) rectangle (1.5, -1.8) {};
    \draw [rounded corners,dashed,thick] (-0.5,-2)--(-0.5,-1)--(2,2)--(4,2)--(4,-2)--cycle;
    \draw[black,thick,dashed](4,1) -- (4.5,1.4);
    \node (pprob2) at (5.6,1.7) { \small First-order bounds};
    \node (pprob2) at (-3.4,-2.4) { \small Bounded on $[0,1]$};
    \draw[black,thick,dotted](-1.9,-2.4) -- (-1.6,-1.8);
  }]
\fbox{\cite[(c)]{Bennett_1962}$\leadsto$\cite{Bennett_1968},\cite{Dembo_2010}}\arrow{d}{}\arrow{rr}{} & & \fbox{\cite{Pinter_1989}$\leadsto$\cite{Peng_2022}}\arrow{dd} \\
\fbox{\cite[(b)]{Bennett_1962}$^1$ $\leadsto$\cite{Jebara_2018}} \arrow{d}\arrow[out=0]{drr}{} &  \\
\fbox{\cite{Cheng_2022}} \arrow{r}{}& \fbox{\cite{From_2013,Zheng_2017}}\arrow{r}{} & \fbox{\cite{Hoeffding_1963}$^2$}
\end{tikzcd}
\caption{Classification of $t\mapsto \bbE[e^{tX}]$ estimations\\
$^{1}$ Bennett's inequality\\
$^{2}$ Hoeffding's inequality}
\label{fig::classification_mgf}
\medskip
\small If [a] $\Rightarrow$ [b], the upper estimator of the moment-generating function in [a] is tighter than in [b].
If [a] $\leadsto$ [b], [b] uses the same moment-generating estimator but improves / extends the results. Proofs of the different implications are provided in~\Cref{sec::comparison_moment_generating}.
\end{figure}

In this paper, we focus on the moment-generating estimator introduced in~\cite[(c)]{Bennett_1962}: for a random variable $X$ with mean $\mu$, variance $\sigma^2$ and such that $|X-\mu|\leq b$, we have the following inequality:
\begin{equation}\label{eq::mgf_bennett}
	\forall t\in\bbR,\;\bbE\left[e^{t X}\right] \leq e^{t\mu}\frac{\sigma^2 e^{|t| b} + b^2 e^{-\frac{|t|\sigma^2}{b }}}{b^2 + \sigma^2}\enspace.
\end{equation}This estimator is as tight as possible (knowing only $\mu$, $\sigma$ and $b$), since it has been proved to be exact for a particular Bernoulli distribution, see e.g.~\cite{Bennett_1962}. Dembo and Zeitouni~\cite{Dembo_2010} exploit this inequality but limit the study to identically distributed variables to obtain a closed-form expression involving a Kullback–Leibler divergence. Bennett \cite{Bennett_1968} extends the results to non identically distributed variables, but, in order to obtain explicit formula, further approximations have been made, leaving room for possible improvements. In contrast, we do not make additional approximations and directly construct the Chernoff bound using~\eqref{eq::mgf_bennett}. Even if an analytic solution is not known in the heterogeneous setting, we prove that this bound can be used in many applications.

We first focus on the computation of confidence bound and introduce a double bisection algorithm (\Cref{algo::refined_prob_bound}). We prove that this algorithm computes a bound with arbitrary precision in polynomial time (\Cref{prop::convergence_algo}).
This algorithm belongs to the class of Probabilistic Bisection Algorithms (PBA), see e.g.~\cite{Horstein_1963,Waeber_2013}, but instead of having a zero-mean noise, the error is bounded and controlled by a parameter.

We then apply this result on Chance-Constrained Programming (CCP)~\cite{Charnes_Cooper_1959,Miller_1965,Prekopa_1970,Henrion_2004,vanAckooij_2020}, a very attractive tool for dealing with uncertainty in optimization problems in addition to stochastic \cite{Birge_2011, Kall_1994, Ruszczynski_2003} and robust \cite{BenTal_2009, BenTal_2000} optimization approaches. This approach relies upon the characterization of uncertainty by means of probabilistic information and tries to find a good solution in a probabilistic sense. CCP aims at finding the best solution which satisfies uncertain constraints of the form $g(x, \xi) \geq 0$ with a given probability $p$, typically close to 1. A general CCP is expressed as:
\begin{eqnarray}\label{eq::CCP}
&\min& f[c(x,\xi)] \nonumber \\
&s.t.& \mathbb{P}[g_{i}(x, \xi) \geq 0, (i = 1, . . . ,m)] \geq p \label{eq:generalCCP} \\
&& x \in X \nonumber,
\end{eqnarray}
where $x \in \bbR^n$ denote a decision vector, $f$ is some general risk (for example $f(.)=\bbE{.}$) applied to the cost function $c$ that is impacted by uncertainty $\xi \in \bbR^m$ a general random vector/process, $\mathbb{P}$ is
the probability measure associated to the probability space
$(\Omega, \mathcal{F}, \mathbb{P})$ on which is defined $\xi$ applied to a whole system of $m$ stochastic inequalities $g_{i}(x, \xi) \geq 0, (i = 1, . . . ,m)$, $ p \in (0,1)$ is a given confidence level and $X$ models the deterministic feasible set for the decision vector $x$.

Although the underlying mathematical difficulties lead to very challenging tasks in the general case, CCP may lead to tractable algorithms. 
It is in particular the case for individual chance-constrained optimization 
$ \mathbb{P}[g_{i}(x, \xi) \geq 0] \geq p_{i}, \ (i = 1, . . . ,m)$ in Gaussian setting leading to a SOCP~\cite{Henrion_2007}, a convex optimization problem. 
More generally, for specific families of distributions, it is known that the set of a probabilistic constraint is convex, making possible the use of nonlinear methods, see e.g.~\cite{Prekopa_1995, Lagoa_2005,vanAckooij_Malick_2017}. 
However, the distributions are commonly unavailable in many applications, and even the evaluation of the constraint is not easy. 
A classical method is then to find a conservative approximation of the problem that is distributionally robust. In this case, the chance constraint are satisfied for any distribution and an optimal solution of the approximate problem gives a feasible solution of the original chance-constrained problem. 
Concentration inequalities have already been used in this context: to the best of our knowledge, Pinter~\cite{Pinter_1989} was the first to use concentration inequalities in optimization problem. Nemirovski and Shapiro~\cite{Nemirovski_2007} proved that the use of Chernoff's bounds provides tractable conservative approximation of chance-constrained problems. 
In particular, they detailed the convex approximation for several families of univariate distributions. 
Peng, Maggioni and Lisser~\cite{Peng_2022} focuses on SOCP conservative approximations of two types: distributionnaly robust formulations based on Hoeffding and Chebyshev's inequalities, and models that assumes  a normally distributed uncertainty. In particular, they deal with joint independent chance constraints, which is known to be of a high complexity, see e.g.~\cite{Nemirovski_2007}. 

Here, we compare various formulations on knapsack problems~\cite{Calafiore_2006,Ryu_2021} by specializing the study to second-order bounds (knowledge of means and variances). To this purpose,  we introduce a new convex conservative approximation based on Bernstein's inequality (\Cref{prop::convex_knapsack}) and derive from the tight Bennett's inequality a strong approximation (\Cref{prop::CKP-B}). We show that the two formulations can be efficiently solved by a cutting-plane approach and lead to solution improvement on instances from the literature (\Cref{table::knapsack_results}). In particular, for a given budget, we improve the objective value compared to the SOCP formulations~\cite{Peng_2022}.

Finally, we focus on the Support Vector Machine (SVM) problem with uncertainties, where the difficulty lies in the large number of probabilistic constraints. The distributionally robust version of the problem has been addressed in~\cite{Ben_Tal_2010,Wang_2015,Khanjani_2022}. 
In particular, Ben-Tal et al.~\cite{Ben_Tal_2010} first consider the same moment-generating estimator~\eqref{eq::mgf_bennett}, but make additional approximations in order to obtain SOCP formulations.  
Using convex optimization tools, we numerically highlight that our approach increases further the quality of the separation hyperplane while staying tractable for instances of substantial size.

This paper is organized as follows. In~\Cref{sec::theory}, we
first derive properties of the proposed inequality, and numerically observe its asymptotic behavior. Then, we introduce in~\Cref{sec::computing} an algorithm to compute confidence bounds. Finally, in~\Cref{sec::application}, we apply the inequality to chance-constrained programming, focusing on knapsack problems~(\Cref{sec::knapsack}) and on Support Vector Machine problem~(\Cref{sec::SVM}).

\paragraph{Notations.} For two vectors $a,b$ of $\bbR^N$, we denote by $\left<a,b\right>$ the Euclidean scalar product. Moreover, $a\,\wedge\,b$ (resp. $a\,\vee\,b$) stands for the component-wise maximum (resp. the minimum) between $a$ and $b$. Besides, for a discrete set $X$, $\Conv(X)$ will read as the convex envelope of $X$.



\section{On the tightest Cramér-Chernoff bound}\label{sec::theory}


We first recall Hoeffding's~\cite{Hoeffding_1963} and Bennett's~\cite{Bennett_1962} inequalities:
\begin{prop}[Hoeffding]\label{prop::Hoeffding}
Let $X_1,\hdots,X_N$ be $N$ independent random variables such that 
$\Prob[a_k \leq X_k - \bbE[X_k] \leq b_k] = 1$ for all $k\in\{1,\hdots,N\}$.
Then, for all $d\geq 0$,
\begin{equation}\label{eq::Hoeffding}
\ln\;\Prob\left[\sum_{k=1}^N X_k - \bbE[X_k] \geq d\right] \leq -\frac{2d^2 }{\sum_{k=1}^N (b_k - a_k)^2}\enspace.
\end{equation}
As a consequence, for all $\tau\in]0,1[$, $\Prob\left[\sum_{k=1}^N X_k - \bbE[X_k] \geq d_\tau\right] \leq \tau$ where
$
d_\tau =  \|b-a\|_2 \sqrt{-\ln\left(\sqrt{\tau}\right)} \enspace.
$
\end{prop}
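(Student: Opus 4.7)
The plan is to follow the classical Cramér--Chernoff recipe. For any $t>0$, I would apply Markov's inequality to the nonnegative random variable $\exp\bigl(t\sum_{k=1}^N (X_k-\bbE[X_k])\bigr)$ and use independence to factor the moment generating function, obtaining
$$\Prob\left[\sum_{k=1}^N (X_k-\bbE[X_k])\geq d\right]\leq e^{-td}\prod_{k=1}^N \bbE\bigl[e^{t(X_k-\bbE[X_k])}\bigr].$$

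The key step is then to invoke Hoeffding's lemma: for a centered variable $Y_k:=X_k-\bbE[X_k]\in[a_k,b_k]$, one has $\bbE[e^{tY_k}]\leq e^{t^2(b_k-a_k)^2/8}$. I would establish this by writing $Y_k$ as the convex combination $Y_k=\lambda_k b_k+(1-\lambda_k)a_k$ with $\lambda_k=(Y_k-a_k)/(b_k-a_k)$, applying convexity of $x\mapsto e^{tx}$, taking expectations (using $\bbE[Y_k]=0$), and finally bounding the resulting log-MGF by its second-order Taylor expansion, where the second derivative is controlled through the variance of the tilted distribution supported in $[a_k,b_k]$, itself at most $(b_k-a_k)^2/4$.

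Combining this bound for each $k$ yields
$$\Prob\left[\sum_{k=1}^N (X_k-\bbE[X_k])\geq d\right]\leq \exp\left(-td+\frac{t^2}{8}\sum_{k=1}^N (b_k-a_k)^2\right).$$
The right-hand side is a convex quadratic in $t$, minimized at $t^\star=4d/\sum_k(b_k-a_k)^2>0$, which, after taking logarithms, delivers exactly the bound \eqref{eq::Hoeffding}. The consequence then follows by solving $-2d_\tau^2/\|b-a\|_2^2=\ln\tau$ for $d_\tau$ and noting that $\sqrt{-\tfrac{1}{2}\ln\tau}=\sqrt{-\ln\sqrt{\tau}}$.

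The only genuinely nontrivial step is Hoeffding's lemma itself; the rest is bookkeeping. Since the lemma is entirely standard and the proof is short, I do not expect a serious obstacle, though if one wished to present a fully self-contained argument, care must be taken in showing that $\psi(t):=\ln\bigl(p e^{tb}+(1-p)e^{ta}\bigr)-tpb-t(1-p)a$ (with $p=-a/(b-a)$) satisfies $\psi(0)=\psi'(0)=0$ and $\psi''(t)\leq (b-a)^2/4$ uniformly in $t$.
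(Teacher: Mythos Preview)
Your proposal is a correct and complete sketch of the standard Cramér--Chernoff argument, with Hoeffding's lemma as the key ingredient; there is no gap. However, the paper does not give its own proof of this proposition: it is stated as a recalled classical result with a citation to~\cite{Hoeffding_1963} and no proof environment follows. So there is nothing to compare against---your argument is precisely the textbook proof one would expect, and it is fine as written.
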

\begin{prop}[Bennett]\label{prop::Bennett}
Let $X_1,\hdots,X_N$ be $N$ independent random variables such that 
\begin{enumerate}[label=(\roman*)]
\item $\Prob[X_k - \bbE[X_k] \leq b] = 1$, $k\in\{1,\hdots,N\}$,
\item $\sum_{k=1}^N\bbE[X^2_k] \leq \sigma^2$.
\end{enumerate}
Then, with $g:u\mapsto (1+u)\ln(1+u) - u$, we get for all $d\geq 0$,
\begin{equation}\label{eq::Bennett}
\ln\;\Prob\left[\sum_{k=1}^N X_k - \bbE[X_k] \geq d\right] \leq -\frac{\sigma^2}{b^2}g\left(\frac{bd}{\sigma^2}\right)\enspace.
\end{equation}
As a consequence, for all $\tau\in]0,1[$, $\Prob\left[\sum_{k=1}^N X_k - \bbE[X_k] \geq d_\tau\right] \leq \tau$ where
$
d_\tau = \frac{\sigma^2}{b}g^{\shortminus 1}\left(\frac{b^2}{\sigma^2}\ln\left(\frac{1}{\tau}\right)\right) \enspace.
$
\end{prop}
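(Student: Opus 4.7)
The plan is to follow the classical Cramér--Chernoff route. Introducing $S := \sum_{k=1}^N (X_k - \Esp[X_k])$, applying the exponential Markov inequality at parameter $t\geq 0$, and factoring $\Esp[e^{tS}]$ by independence reduces the proof to controlling each individual MGF $\Esp[e^{t(X_k - \Esp[X_k])}]$ from above.

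The key step is the MGF bound. Setting $Y_k := X_k - \Esp[X_k]$, one has $\Esp[Y_k]=0$, $Y_k\leq b$ and $\Esp[Y_k^2]\leq\Esp[X_k^2]$. Using the Taylor--remainder representation $e^{tx}-1-tx = x^2 \int_0^1 (1-s)\,t^2 e^{tsx}\,ds$, the map $\phi(x) := (e^{tx}-1-tx)/x^2$ (extended by $t^2/2$ at $0$) is non-decreasing on $\bbR$ when $t\geq 0$, hence $\phi(Y_k)\leq \phi(b)$, i.e.\ $e^{tY_k} \leq 1 + tY_k + Y_k^2 (e^{tb}-1-tb)/b^2$. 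Taking expectation kills the linear term, and the elementary inequality $1+u\leq e^u$ gives $\Esp[e^{tY_k}]\leq \exp\!\bigl(\Esp[Y_k^2](e^{tb}-1-tb)/b^2\bigr)$. Multiplying over $k$ and using $\sum_k\Esp[X_k^2]\leq\sigma^2$ then yields
\[
\Prob[S\geq d] \;\leq\; \exp\!\left(-td + \frac{\sigma^2}{b^2}\bigl(e^{tb}-1-tb\bigr)\right).
\]

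What remains is a one-dimensional convex optimization in $t\geq 0$. The first-order condition reads $e^{tb} = 1 + bd/\sigma^2$, whose non-negative solution $t^* = \tfrac{1}{b}\ln(1+bd/\sigma^2)$ is admissible for every $d\geq 0$. Substituting back and setting $u:=bd/\sigma^2$ collapses the exponent to $-\tfrac{\sigma^2}{b^2}\bigl[(1+u)\ln(1+u)-u\bigr] = -\tfrac{\sigma^2}{b^2}g(u)$, which is exactly~\eqref{eq::Bennett}. Since $g$ is strictly increasing on $[0,+\infty)$ with $g(0)=0$, inverting the bound with respect to its right-hand side directly yields $d_\tau = \tfrac{\sigma^2}{b}\,g^{-1}\bigl(\tfrac{b^2}{\sigma^2}\ln(1/\tau)\bigr)$. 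The only real obstacle is the monotonicity of $\phi$, which is precisely where the one-sided assumption $Y_k\leq b$ enters; everything else is routine bookkeeping and a textbook Chernoff optimization.
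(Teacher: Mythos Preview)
Your argument is correct and is precisely the classical Cram\'er--Chernoff derivation of Bennett's inequality: the integral representation of $\phi(x)=(e^{tx}-1-tx)/x^2$ gives monotonicity, the one-sided bound $Y_k\leq b$ then yields the quadratic MGF estimate, and the optimization in $t$ is handled cleanly. The use of $\Esp[Y_k^2]=\Var(X_k)\leq\Esp[X_k^2]$ to pass from individual variances to the aggregate bound $\sigma^2$ is also correct.

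There is nothing to compare against: the paper merely \emph{recalls} Bennett's inequality as a classical result (citing~\cite{Bennett_1962}) and does not supply its own proof of Proposition~\ref{prop::Bennett}. Your derivation is the standard one and would be an appropriate proof to include.
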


\Cref{prop::Hoeffding} and \Cref{prop::Bennett} does not suppose the same \emph{a priori} knowledge on the random variables: in the latter, information in second-moment is supposed whereas the former only needs knowledge on the mean of each random variable.
We now focus on the tightest second-order Cramér-Chernoff bound, firstly introduced in~\cite{Bennett_1962}, and based on~\eqref{eq::mgf_bennett}:
\begin{theorem}[Refined Bennett's inequality~\cite{Nemirovski_2007},Table 2]\label{prop::mgf}
Let $X_1,\hdots,X_N$ be $N$ independent random variables such that
\begin{enumerate}[label=(\roman*)]
\item $\Prob[X_k - \bbE[X_k]\leq b_k] = 1$, $k\in\{1,\hdots,N\}$,
\item $\Var(X_k) \leq \sigma^2_k$, $k\in\{1,\hdots,N\}$.
\end{enumerate}
Then, introducing $\gamma_k := \frac{\sigma_k^2}{b_k^2}$, for all $d\geq 0$
\begin{equation}\label{eq::pos_sum}
\forall \lambda\in\bbR_+^N,\quad\ln\;\Prob\left[\left<\lambda, X-\bbE[X]\right>\geq d\right] \leq \inf_{t> 0}\left\{-td 
+ \sum_{k=1}^N\ln\left(\frac{\gamma_k e^{t \lambda_k b_k}+e^{-t \lambda_k b_k \gamma_k}}{1+\gamma_k}\right)\right\}\enspace.
\end{equation}
In addition, if $\Prob[X_k - \bbE[X_k]\geq -b_k] = 1$, 
\begin{equation}\label{eq::qq_sum}
\forall \lambda\in\bbR^N,\quad\ln\;\Prob\left[\left<\lambda,X-\bbE[X]\right>\geq d \right] \leq\inf_{t> 0}\left\{-td 
+ \sum_{k=1}^N\ln\left(\frac{\gamma_k e^{t |\lambda_k| b_k}+e^{-t |\lambda_k| b_k \gamma_k}}{1+\gamma_k}\right)\right\}\enspace.
\end{equation}
\end{theorem}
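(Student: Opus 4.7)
The plan is to invoke the Cramér–Chernoff recipe and apply the moment-generating estimator~\eqref{eq::mgf_bennett} termwise, exploiting independence.

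First I would fix $t > 0$, apply Markov's inequality to $\exp(t\langle\lambda, X - \bbE[X]\rangle)$, and factorize the expectation across the independent coordinates:
\[
\Prob\!\left[\langle\lambda, X - \bbE[X]\rangle \geq d\right] \;\leq\; e^{-td}\prod_{k=1}^N \bbE\!\left[e^{t\lambda_k (X_k - \bbE[X_k])}\right].
\]
Taking the logarithm and optimizing over $t > 0$ will yield a bound of exactly the form announced in~\eqref{eq::pos_sum}–\eqref{eq::qq_sum}, so the problem reduces to controlling each factor in the product.

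Next I would apply~\eqref{eq::mgf_bennett} to the centered variable $Y_k := \lambda_k(X_k - \bbE[X_k])$, which has mean zero. For~\eqref{eq::pos_sum}, where $\lambda_k \geq 0$ and only an upper bound on $X_k - \bbE[X_k]$ is available, the hypothesis propagates to $Y_k \leq \lambda_k b_k$ almost surely, with $\Var(Y_k) \leq \lambda_k^2 \sigma_k^2$. Substituting $b \leftarrow \lambda_k b_k$ and $\sigma^2 \leftarrow \lambda_k^2 \sigma_k^2$ in~\eqref{eq::mgf_bennett}, and observing that the ratio $\gamma_k = \sigma_k^2/b_k^2$ is invariant under this rescaling, produces precisely the factor $(\gamma_k e^{t\lambda_k b_k} + e^{-t\lambda_k b_k \gamma_k})/(1+\gamma_k)$ appearing in~\eqref{eq::pos_sum}. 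For~\eqref{eq::qq_sum}, $\lambda_k$ may have arbitrary sign, but the stronger two-sided assumption $|X_k - \bbE[X_k]| \leq b_k$ gives $|Y_k| \leq |\lambda_k| b_k$; the same substitution with $|\lambda_k|$ in place of $\lambda_k$ delivers the corresponding factor in~\eqref{eq::qq_sum}.

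The only delicate point is the validity of~\eqref{eq::mgf_bennett} in the one-sided regime needed for~\eqref{eq::pos_sum}. Here I rely on the classical fact that, among centered random variables with prescribed upper support point $b$ and variance at most $\sigma^2$, the moment-generating function (for $t \geq 0$) is maximized by the two-point Bernoulli law on $\{-\sigma^2/b,\,b\}$ with weights $(b^2,\sigma^2)/(b^2+\sigma^2)$; evaluating $\bbE[e^{tY}]$ on this extremal law reproduces~\eqref{eq::mgf_bennett} and justifies its use under the one-sided hypothesis of~\eqref{eq::pos_sum}. Once each factor has been bounded, the independence-induced product turns, after taking $\ln$, into the sum of logarithms on the right-hand side, and the infimum over $t > 0$ yields the announced inequality.
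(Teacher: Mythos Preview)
Your proposal is correct and follows essentially the same Cramér--Chernoff route as the paper: Markov's inequality, factorization by independence, and termwise application of~\eqref{eq::mgf_bennett}. If anything, your treatment is slightly more careful than the paper's own proof, since you explicitly address why~\eqref{eq::mgf_bennett} remains valid under the one-sided hypothesis used for~\eqref{eq::pos_sum} (the extremal two-point law argument), whereas the paper simply invokes~\eqref{eq::mgf_bennett} without commenting on this point.
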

\begin{proof}
Using the Chernoff' inequality on the variable $\left<\lambda,X-\bbE[X]\right>$, we obtain
$$
\Prob\left[\left<\lambda,X-\bbE[X]\right>\geq d\right]\leq e^{- t (d+\left<\lambda,\bbE[X]\right>)}\;\bbE\left[e^{t \left<\lambda,X\right>}\right]\enspace.
$$
By the independence of the variables $X_k$, we have
$\bbE\left[e^{t \left<\lambda,X\right>}\right] = \prod_{k=1}^N
\bbE\left[e^{t\lambda_k X_k}\right]$.
Finally, using~\eqref{eq::mgf_bennett},
we obtain for all $t\geq 0$:
$$\Prob[\left<\lambda,X-\bbE[X]\right>\geq d] \leq e^{-td}
\prod_{k=1}^N \left(\frac{\gamma_k e^{t |\lambda_k|b_k} + e^{-t|\lambda_k|\gamma_k b_k}}{ 1+ \gamma_k}\right)\enspace.$$
We conclude by applying the logarithm and by rearranging the terms.
\end{proof}
The right-hand sides of~\eqref{eq::pos_sum} and~\eqref{eq::qq_sum} correspond to the Cramér transform~\cite[Section 2.2]{Dembo_2010} of the Bernoulli distribution that achieves the equality in~\eqref{eq::mgf_bennett}.
The scope of \Cref{prop::mgf} is slightly more general than Hoeffding and Bennett inequality since we allow to have sum of weighted heterogeneous random variables (positive or negative weights). 

Under the assumptions of~\Cref{prop::mgf}, and introducing $\taumin:=\prod_{k=1}^N\frac{\gamma_k}{1+\gamma_k}$, we get as an immediate corollary :
\begin{equation}\label{eq::varphi_star}
\ln\;\Prob\left[\sum_{k=1}^N X_k - \bbE[X_k] \geq \alpha N\right] \leq \varphi_\alpha^* :=\inf_{t\geq 0} \varphi_\alpha(t)\enspace,
\end{equation}
where $\overline{b} = \frac{1}{N}\sum_{k=1}^N b_k$ and 
\begin{equation}\label{eq::varphi_2}
\varphi_\alpha : t\geq0\mapsto \ln(\taumin) + N t\left(\bmean - \alpha\right) 
+ \sum_{k=1}^N\ln\left(1+\gamma_k^{\shortminus 1}e^{-t b_k (1+\gamma_k)}\right)\enspace.
\end{equation}
The expression of $\varphi_\alpha$ is derived from~\eqref{eq::pos_sum} with $\lambda_k=1$. 
In the specific case where the coefficient $\bbE[X_k],\sigma_k$ and $b_k$ are identical for all $k\in \{1\hdots N\}$, the minimization in $t$ that appears in~\eqref{eq::pos_sum} has an analytic solution (using Kullback-Leibler divergence), see e.g.~\cite{Dembo_2010,Raginsky_2013}. In the framework of this paper, we allow heterogeneous parameters, and therefore the minimum is no longer analytically known.
 Nonetheless, the following properties show that the one-dimensional minimization is well defined:
 
\begin{prop}[Study of $\varphi_\alpha$]\label{corol::prop_on_varphi}Let $\alpha\geq 0$, then $\varphi_\alpha(0) = 0$. Moreover, the mapping $\varphi_\alpha$ is twice differentiable and their respective derivatives are
\begin{enumerate}[label=(\roman*)]
\item $\displaystyle\frac{d}{dt}\varphi_\alpha(t) = N\left(\bmean - \alpha\right) - \sum_{k=1}^N \frac{b_k(1+\gamma_k)} {1+\gamma_k e^{t b_k (1+\gamma_k)}}\enspace,$
\item$\displaystyle\frac{d^2}{dt^2}\varphi_\alpha(t) = \sum_{k=1}^N b_k^2 (1+\gamma_k)^2\frac{\gamma_k e^{t b_k(1+\gamma_k)}}{\left(1+\gamma_k e^{t b_k(1+\gamma_k)}\right)^2}\enspace.$\\
Moreover, $0\leq \frac{d^2}{dt^2}\,\varphi_\alpha(t) \leq M:=\frac{1}{2}\sum_{k=1}^N b^2_k (1+\gamma_k)^2$.
\end{enumerate}
\end{prop}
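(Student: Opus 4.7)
The plan is to verify each claim by direct computation from the explicit form of $\varphi_\alpha$ given in~\eqref{eq::varphi_2}.

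First I would check $\varphi_\alpha(0) = 0$. Substituting $t = 0$ into~\eqref{eq::varphi_2} kills the linear term and leaves
\[
\varphi_\alpha(0) = \ln\taumin + \sum_{k=1}^N \ln\!\left(1 + \gamma_k^{-1}\right) = \sum_{k=1}^N \ln\frac{\gamma_k}{1+\gamma_k} + \sum_{k=1}^N \ln\frac{1+\gamma_k}{\gamma_k},
\]
and the two sums cancel term by term. Note that $\alpha$ plays no role here, which is consistent with $\alpha$ appearing only in the linear coefficient of $t$.

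Next I would differentiate. The constant $\ln\taumin$ vanishes, the linear term gives $N(\bmean - \alpha)$, and each term $\ln(1 + \gamma_k^{-1} e^{-tb_k(1+\gamma_k)})$ contributes, via the chain rule,
\[
\frac{-\gamma_k^{-1} b_k(1+\gamma_k)\, e^{-tb_k(1+\gamma_k)}}{1 + \gamma_k^{-1} e^{-tb_k(1+\gamma_k)}}.
\]
Multiplying the numerator and denominator by $\gamma_k e^{tb_k(1+\gamma_k)}$ rewrites this as $-b_k(1+\gamma_k)/(1 + \gamma_k e^{tb_k(1+\gamma_k)})$, yielding formula~(i). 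Differentiating (i) once more, the constant $N(\bmean-\alpha)$ drops out and each summand $-b_k(1+\gamma_k)\cdot\bigl[1+\gamma_k e^{tb_k(1+\gamma_k)}\bigr]^{-1}$ gives, by the chain/quotient rule,
\[
b_k(1+\gamma_k)\cdot\frac{\gamma_k\,b_k(1+\gamma_k)\,e^{tb_k(1+\gamma_k)}}{\bigl(1+\gamma_k e^{tb_k(1+\gamma_k)}\bigr)^2},
\]
which is exactly formula~(ii). Twice differentiability is automatic since each summand is smooth in $t$ (the denominator never vanishes because $\gamma_k > 0$).

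For the two-sided bound on $\varphi_\alpha''$, the lower bound $0$ is immediate: every factor in each summand of~(ii) is strictly positive. For the upper bound I would set $y_k := \gamma_k e^{tb_k(1+\gamma_k)} > 0$ and use the elementary inequality $y/(1+y)^2 \leq 1/4$ for all $y \geq 0$ (which follows from $(1-y)^2 \geq 0$, i.e.\ $(1+y)^2 \geq 4y$). Then each summand of~(ii) is bounded by $\tfrac14 b_k^2 (1+\gamma_k)^2$ and summing over $k$ gives $\varphi_\alpha''(t) \leq \tfrac14\sum_{k=1}^N b_k^2(1+\gamma_k)^2 \leq M$, as claimed.

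No step presents a real obstacle: everything reduces to the chain rule, the quotient rule, and the AM-GM-type bound $(1+y)^2\geq 4y$. The only mild subtlety is recognizing that the constant $\ln\taumin$ was precisely chosen so that $\varphi_\alpha(0)=0$, which is what allows~\eqref{eq::varphi_star} to trivially upper-bound the logarithm of a probability by $0$ at $t=0$.
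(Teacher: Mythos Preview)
Your proof is correct and follows essentially the same route as the paper: direct term-by-term differentiation of the summands of $\varphi_\alpha$, then the elementary bound $y/(1+y)^2\leq 1/4$ for the second derivative. The paper packages the computation via an auxiliary function $f(t)=\ln(1+\gamma^{-1}e^{-td})$ and states the looser bound $f''(t)\leq d^2/2$ without the AM--GM justification, whereas you work directly and actually obtain the sharper $\tfrac14\sum_k b_k^2(1+\gamma_k)^2$ before relaxing to $M$; you also explicitly verify $\varphi_\alpha(0)=0$, which the paper's proof leaves implicit.
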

\begin{proof}
Let us introduce for all $\gamma,d \in \R^*_+$,
$f(t) = \ln\left(1+\gamma^{\shortminus 1} e^{-td} \right)$.
It follows that
$
f'(t) = \frac{-d}{1+\gamma e^{td}}
$
and
$
f''(t) = d^2\frac{\gamma e^{td}}{\left(1+\gamma e^{td}\right)^2}\in \left[0,\frac{d^2}{2}\right]\enspace.$
To recover the result, note that $\varphi_\alpha(t)$ is the sum of functions $f(\cdot)$ with
$d_k = b_k(1+\gamma_k)$.
\end{proof}
We immediately deduce from~\Cref{corol::prop_on_varphi} that the function $\varphi_\alpha(\cdot)$ is strictly convex, and thus the position of the minimum, denoted by $t_\alpha^* \in \R_+\cup\{+\infty\}$, is unique. The following lemma lists useful properties of $ \varphi^*_\alpha$ that will be used in the sequel.

\begin{lemma}[Study of $\varphi^*_\alpha$]\label{lemma::prop_on_varphi_star}~
\begin{enumerate}[label=(\roman*)]
  \item For $\alpha\leq \bmean :=\frac{1}{N}\sum_{k=1}^N b_k$, the function $\alpha\mapsto\varphi^*_\alpha$ is decreasing. \\Moreover, $\varphi^*_0 = 0$, $\varphi^*_{\bmean} = \ln(\taumin)$ and for $\alpha > \bmean$, $\varphi^*_\alpha = -\infty$.
  \item For $\alpha < \min_k\{b_k\}$, $t^*_\alpha \in \Conv\left(\left\{t^{(k)}_\alpha\right\}_k\right)$, 
  where 
  $t^{(k)}_\alpha = \frac{1}{b_k(1+\gamma_k)} \ln\left(\frac{\alpha + b_k \gamma_k}{\gamma_k (b_k - \alpha)}\right)\enspace.$\\
  Moreover, $t^*_\alpha \leq -\frac{1}{N(\bmean-\alpha)}\ln(\tau^-)\enspace.$
  \item For $\alpha_1,\alpha_2 < \bmean$,\quad
  $\left|\varphi^*_{\alpha_2} - \varphi^*_{\alpha_1}\right| \geq N \min\{t^*_{\alpha_1},t^*_{\alpha_2}\} \left|\alpha_2 - \alpha_1\right|$.
\end{enumerate}
\end{lemma}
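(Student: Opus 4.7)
The three items share one structural observation which I would exploit throughout: for every fixed $t\geq 0$, the map $\alpha\mapsto\varphi_\alpha(t)$ is affine (indeed linearly decreasing with slope $-Nt$), since $\alpha$ appears only in the term $Nt(\bmean-\alpha)$ of \eqref{eq::varphi_2}. Consequently, $\alpha\mapsto\varphi^*_\alpha$, being an infimum of affine-in-$\alpha$ maps, is concave and non-increasing.

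\emph{For (i).} The monotonicity on $\alpha\leq\bmean$ follows from the observation above. For the boundary values, I would evaluate $\varphi'_\alpha$ at $t=0$ using \Cref{corol::prop_on_varphi}(i): since $\sum_k\frac{b_k(1+\gamma_k)}{1+\gamma_k}=\sum_kb_k=N\bmean$, one finds $\varphi'_\alpha(0)=-N\alpha$. At $\alpha=0$ this vanishes, and strict convexity forces $t^*_0=0$, giving $\varphi^*_0=\varphi_0(0)=0$. At $\alpha=\bmean$ the linear term in \eqref{eq::varphi_2} disappears, so sending $t\to+\infty$ makes each $\ln(1+\gamma_k^{\shortminus 1}e^{-tb_k(1+\gamma_k)})$ vanish and leaves the constant $\ln(\taumin)$. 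For $\alpha>\bmean$, the linear term $Nt(\bmean-\alpha)$ dominates as $t\to\infty$, driving $\varphi_\alpha(t)\to-\infty$.

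\emph{For (ii).} I would first note that the first-order condition $\varphi'_\alpha(t)=0$ reads $\sum_k\psi_k(t)=\sum_k(b_k-\alpha)$, with $\psi_k(t):=\frac{b_k(1+\gamma_k)}{1+\gamma_k e^{tb_k(1+\gamma_k)}}$ strictly decreasing in $t$. A direct inversion of the equation $\psi_k(t)=b_k-\alpha$ yields precisely the announced expression for $t^{(k)}_\alpha$; the hypothesis $\alpha<\min_k b_k$ guarantees $t^{(k)}_\alpha\in\bbR_+$. Evaluating at $t=\min_k t^{(k)}_\alpha$ gives $\psi_k(t)\geq b_k-\alpha$ for every $k$, hence $\varphi'_\alpha(t)\leq 0$; similarly $\varphi'_\alpha(\max_k t^{(k)}_\alpha)\geq 0$. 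Monotonicity of $\varphi'_\alpha$ then locates $t^*_\alpha$ in the convex hull. For the explicit upper bound I would use a sandwich: on the one hand $\varphi^*_\alpha\leq\varphi_\alpha(0)=0$; on the other hand, discarding the non-negative log terms in \eqref{eq::varphi_2} gives $\varphi_\alpha(t)\geq\ln(\taumin)+Nt(\bmean-\alpha)$, hence $\varphi^*_\alpha\geq\ln(\taumin)+Nt^*_\alpha(\bmean-\alpha)$. Combining the two inequalities and dividing by $N(\bmean-\alpha)>0$ yields $t^*_\alpha\leq-\frac{\ln(\taumin)}{N(\bmean-\alpha)}$.

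\emph{For (iii).} The key is the interchange trick permitted by the affine structure: for any $t\geq 0$, $\varphi_{\alpha_2}(t)-\varphi_{\alpha_1}(t)=-Nt(\alpha_2-\alpha_1)$. Taking $t=t^*_{\alpha_1}$ and using $\varphi^*_{\alpha_2}\leq\varphi_{\alpha_2}(t^*_{\alpha_1})$ gives $\varphi^*_{\alpha_1}-\varphi^*_{\alpha_2}\geq Nt^*_{\alpha_1}(\alpha_2-\alpha_1)$; symmetrically with $t=t^*_{\alpha_2}$ one obtains $\varphi^*_{\alpha_1}-\varphi^*_{\alpha_2}\leq Nt^*_{\alpha_2}(\alpha_2-\alpha_1)$ (WLOG $\alpha_1<\alpha_2$). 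The first of these is already stronger than the announced inequality since $t^*_{\alpha_1}\geq\min\{t^*_{\alpha_1},t^*_{\alpha_2}\}$; taking absolute values concludes.

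\textbf{Expected obstacle.} The only non-mechanical step is (ii): one must recognise that the $t^{(k)}_\alpha$ are exactly the per-coordinate roots of $\psi_k(t)=b_k-\alpha$ and then extract a sandwich for $t^*_\alpha$ from the aggregate first-order condition. The upper-bound part of (ii) is also slightly subtle because it mixes the trivial estimate $\varphi^*_\alpha\leq 0$ with the non-obvious global lower estimate coming from dropping the log terms; every other step is routine manipulation of a strictly convex one-dimensional map.
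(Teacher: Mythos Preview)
Your proposal is correct and follows essentially the same approach as the paper's proof: the affine-in-$\alpha$ structure for monotonicity in (i), the per-term minimizers $t^{(k)}_\alpha$ combined with the sandwich $\varphi^*_\alpha\in[\ln(\taumin)+Nt^*_\alpha(\bmean-\alpha),\,0]$ for (ii), and the interchange trick $\varphi_{\alpha_2}(t)-\varphi_{\alpha_1}(t)=-Nt(\alpha_2-\alpha_1)$ evaluated at the respective minimizers for (iii). Your treatment of (ii) is in fact slightly more explicit than the paper's, which simply invokes the folklore fact that the minimizer of a sum of convex functions lies in the convex hull of the individual minimizers, whereas you verify this directly by checking the sign of $\varphi'_\alpha$ at $\min_k t^{(k)}_\alpha$ and $\max_k t^{(k)}_\alpha$.
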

\begin{proof}
\begin{enumerate}[label=(\roman*)]
\item Let $\alpha < \beta$. 
There exits $t^*_\alpha$ such that $\varphi^*_\alpha = \varphi_\alpha(t^*_\alpha)$.
Besides, $\varphi_\alpha(t^*_\alpha) = Nt^*_\alpha(\beta - \alpha) + \varphi_{\beta}(t^*_\alpha) > \varphi_{\beta}(t^*_\alpha)$.
As $\varphi^*_{\beta} \leq \varphi_{\beta}(t^*_\alpha)$ by optimality, we easily conclude.\\
Then, if $\alpha = \bmean$, the infimum is reached for $t\to\infty$ and is equal to $\ln(\taumin)$. If now $\alpha = 0$, then the minimum is attained at $t=0$ ($\frac{d\varphi_\alpha}{dt}(t)\geq 0$).
Finally, if $\alpha > \bmean$, then all the terms that appear in $\varphi^*_\alpha$ are decreasing, and the function diverges to $-\infty$.

\item $t^{(k)}_\alpha$ would the minimum if there were only the $k$-term in $\varphi$. As the minimum of a sum of convex functions lies in the convex envelope of the set of minimizers of each term, we get the property.\\ 
Besides, by (i), $\varphi_\alpha(t^*_\alpha) \leq \varphi_\alpha(0) = 0$.
Therefore, $\ln(\tau^-) + Nt_\alpha^*(\bmean - \alpha) \leq 0$, and so $N t_\alpha^*\leq \frac{\shortminus 1}{\bmean - \alpha}\ln(\tau^-)$.

\item Let $\alpha_1,\alpha_2 \leq \bmean$. Then, by definition,
$$
\begin{aligned}
&\varphi^*_{\alpha_1} = \varphi_{\alpha_1}(t^*_{\alpha_1}) = N t^*_{\alpha_1} (\alpha_2 - \alpha_1) + \varphi_{\alpha_2}(t^*_{\alpha_1}) \\
&\varphi^*_{\alpha_2} = \varphi_{\alpha_2}(t^*_{\alpha_2}) = N t^*_{\alpha_2} (\alpha_1 - \alpha_2) + \varphi_{\alpha_1}(t^*_{\alpha_2}) 
\end{aligned}
$$
from which we deduce by optimality of $t^*_{\alpha_1}$ and $t^*_{\alpha_2}$:
$$
\begin{aligned}
&\varphi^*_{\alpha_1} \geq N t^*_{\alpha_1} (\alpha_2 - \alpha_1) + \varphi^*_{\alpha_2}\\
&\varphi^*_{\alpha_2} \geq N t^*_{\alpha_2} (\alpha_1 - \alpha_2) + \varphi^*_{\alpha_1}
\end{aligned}
$$
As a consequence, $\left|\varphi^*_{\alpha_2} - \varphi^*_{\alpha_1}\right|\geq N\min\{t^*_{\alpha_1},t^*_{\alpha_2}\}\left|\alpha_2 - \alpha_1\right|\enspace.$
\end{enumerate}
\end{proof}

The next theorem can be directly derived from~\Cref{lemma::prop_on_varphi_star} and  provides an alternative confidence bound $\alpha_\tau N$ to the bound $d_\tau$ provided in~\Cref{prop::Hoeffding} and~\Cref{prop::Bennett}.
\begin{theorem}\label{prop::main_result}
For all $\tau\in[\taumin,1[$, there exists a unique $\alpha_\tau$ such that $\varphi^*_{\alpha_\tau} = \ln\left(\tau\right)$. 
As a consequence,  $\Prob\left[\sum_{k=1}^N X_k - \bbE[X_k] \geq \alpha_\tau N\right] \leq \tau$.
\end{theorem}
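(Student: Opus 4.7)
The plan is to show that $\alpha\mapsto \varphi^*_\alpha$ is a continuous, strictly decreasing map from $[0,\bmean]$ onto $[\ln\taumin,0]$, from which both existence and uniqueness of $\alpha_\tau$ follow by the intermediate value theorem. The probabilistic conclusion will then come directly from the definition of $\varphi^*_\alpha$ in \eqref{eq::varphi_star}.

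For monotonicity and the range values, I would invoke \Cref{lemma::prop_on_varphi_star}(i), which already gives strict decrease on $[0,\bmean]$ together with the endpoint values $\varphi^*_0=0$ and $\varphi^*_{\bmean}=\ln\taumin$. Strictness of the decrease is visible inside the proof of (i): for $\alpha<\beta$ one has $\varphi_\alpha(t^*_\alpha) = Nt^*_\alpha(\beta-\alpha) + \varphi_\beta(t^*_\alpha) > \varphi_\beta(t^*_\alpha) \geq \varphi^*_\beta$. Consequently, the range of $\varphi^*$ on $[0,\bmean]$ contains $\{0,\ln\taumin\}$ and is contained in $[\ln\taumin,0]$.

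For continuity, the key observation is that for each fixed $t\geq 0$, the map $\alpha \mapsto \varphi_\alpha(t)$ is affine in $\alpha$: in \eqref{eq::varphi_2}, only the term $-Nt\alpha$ depends on $\alpha$. Hence $\varphi^*_\alpha=\inf_{t\geq 0}\varphi_\alpha(t)$ is concave in $\alpha$ as an infimum of affine functions, and is therefore continuous on the open interval $(0,\bmean)$. Right-continuity at $0$ and left-continuity at $\bmean$ can be obtained separately using the bound $t^*_\alpha \leq -\ln(\taumin)/(N(\bmean-\alpha))$ from \Cref{lemma::prop_on_varphi_star}(ii) together with $\varphi_\alpha(t)\geq \varphi_\alpha(0)+t\varphi'_\alpha(0)=-Nt\alpha$ (from convexity, using the derivative formula in \Cref{corol::prop_on_varphi}(i)), which squeezes $\varphi^*_\alpha$ between $0$ and a quantity vanishing as $\alpha\to 0^+$, and symmetrically near $\bmean$.

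Combining strict monotonicity and continuity, $\varphi^*$ is a continuous bijection from $[0,\bmean]$ onto $[\ln\taumin, 0]$. Since $\ln\tau \in [\ln\taumin, 0)$ for every $\tau\in[\taumin,1[$, the intermediate value theorem yields a unique $\alpha_\tau \in (0,\bmean]$ with $\varphi^*_{\alpha_\tau}=\ln\tau$. Substituting $\alpha=\alpha_\tau$ into \eqref{eq::varphi_star} and exponentiating then produces the stated probability bound. The main obstacle I expect is the continuity step: \Cref{lemma::prop_on_varphi_star}(iii) only provides a \emph{lower} bound on $|\varphi^*_{\alpha_2}-\varphi^*_{\alpha_1}|$ and so cannot certify continuity on its own; the concavity argument above is the cleanest way around this gap.
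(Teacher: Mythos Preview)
Your approach coincides with the paper's, which offers no separate proof and simply records that the result ``can be directly derived from \Cref{lemma::prop_on_varphi_star}''. You correctly isolate continuity of $\alpha\mapsto\varphi^*_\alpha$ as the only ingredient not already explicit in that lemma, and your concavity observation (infimum of affine functions in $\alpha$) together with the squeeze at $\alpha\to 0^+$ is sound.

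There is one imprecision: the phrase ``symmetrically near $\bmean$'' does not go through with the tools you name. The tangent inequality $\varphi^*_\alpha\geq -Nt^*_\alpha\,\alpha$ combined with $t^*_\alpha \leq -\ln(\taumin)/\bigl(N(\bmean-\alpha)\bigr)$ yields a lower bound on $\varphi^*_\alpha$ that diverges to $-\infty$ as $\alpha\to\bmean^-$, so no squeeze results. What is needed at $\bmean$ is an \emph{upper} bound tending to $\ln\taumin$ (the lower bound $\varphi^*_\alpha\geq\ln\taumin$ already follows from monotonicity). Two easy fixes: (i) observe that $\varphi^*_\alpha=\inf_{t\geq 0}\varphi_\alpha(t)$ is an infimum of functions continuous in $\alpha$ and is therefore upper semicontinuous, giving $\limsup_{\alpha\to\bmean^-}\varphi^*_\alpha\leq\varphi^*_{\bmean}=\ln\taumin$; or (ii) plug $t=(\bmean-\alpha)^{-1/2}$ into $\varphi_\alpha$ (exactly the choice used in the proof of \Cref{prop::alpha_lower_b}) to obtain $\varphi^*_\alpha\leq \ln\taumin + N\sqrt{\bmean-\alpha}+o(1)$. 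Either route closes the gap and completes your argument.
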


\paragraph{Numerical experiments.}
We aim to numerically compare the bounds developed in~\Cref{sec::theory} with four inequalities: Hoeffding~\eqref{eq::Hoeffding}, Bennett~\eqref{eq::Bennett}, Cantelli (a one-sided improvement of Chebyshev's inequality, see e.g.~\cite{Boucheron_2004}) and the bound introduced by Jebara~\cite{Jebara_2018}.
To this purpose, 
we follow the methodology of~\cite{Jebara_2018}: we search to bound $\ln\;\Prob\left[\sum_{k=1}^N X_k - \bbE[X_k] \geq \alpha N\right]\enspace,$
where the parameters $\bbE[X_k],\sigma_k, a_k$, $b_k$ and $\alpha$ are randomly generated following the rules described in~\Cref{tab::random_var_comp}.
\begin{table}[!ht]
\centering
\begin{tabular}{r|l}
	\hline
  $\bbE[X_k]$ & $\mathcal{U}(0,1)$\\
  $a_k$ & $\mathcal{U}(-1,0)$\\
  $b_k$ & $\mathcal{U}(0,1)$\\
  $\sigma_k$ & $\mathcal{U}\left(0,(b_k - a_k)\,/\,2\right)$\\
  $\alpha$ & $\mathcal{U}\left(0,\bmean\right)$\\
  \hline
\end{tabular}
\caption{Definition of the random variables}\label{tab::random_var_comp}
\end{table}

In order to have a fast implementation of $\varphi^*_\alpha$, we introduce a bisection algorithm, see~\Cref{algo::refined_prob_bound}. Note that this bisection method is only valid because we have shown that $\varphi_\alpha$ is convex and $t^*_\alpha$ is bounded, see~\Cref{lemma::prop_on_varphi_star}. The four other bounds are immediate to compute as they are analytically known.
\begin{algorithm}[!ht]
\caption{Bisection Search to compute $\varphi^*_\alpha$}\label{algo::refined_prob_bound}
\begin{algorithmic}
\Require $N$, $\alpha$, $b_k$, $\sigma_k$, $\epsilon_t$
\State $t^-,t^+\gets 0,\shortminus\frac{1}{N(\bmean - \alpha)}\ln(\tau^-)$ 
  \While{$t^+ - t^- > \epsilon_t$ }
     \State $\hat{t} \gets \frac{1}{2}(t^- + t^+)$
     \State $g \gets \frac{d}{dt} \varphi_\alpha(\hat{t})$
      \IfThenElse{$g\geq 0$}
        {$t^+ \gets \hat{t}$}
        {$t^- \gets \hat{t}$}
  \EndWhile
~\\\Return $\hat{\varphi}$
\end{algorithmic}
\end{algorithm}

The result are depicted in~\Cref{fig::res} for 500 realizations and are performed on a laptop \texttt{Intel Core i7 @2.20GHz\,$\times$\,12}. 
The log-probability of the four methods are represented on the $y$-axis for each value of $\varphi_\alpha^*$, represented on the $x$-axis. 
We recover the results proved in~\Cref{sec::comparison_moment_generating}: 
$\varphi_\alpha^*$ always outperforms Bennett, Hoeffding and Jebara's inequalities. We observe that Cantelli's bound is better for large probability error (small $\alpha$) -- typically $\exp\;\varphi^*_\alpha\geq 20\%$ -- but becomes rapidly dominated by the four other Chernoff's inequalities. In fact, Chebyshev's inequality has a quadratic decay in $\alpha$ when the Chernoff's bounds has exponential behaviors . For $\varphi^*_\alpha \geq -5$, the bound from~\cite{Jebara_2018} may be less efficient. Possibly, this bound can exceed 1, because the minimizer that is used in the Chernoff's inequality has no guarantee to be optimal.

The computation of the Bennett's and Hoeffding's bounds is almost immediate. The refined version of~\cite{Jebara_2018} takes around $1$ms per instance for $N=100$ (due to the computation of Lambert function), and $\varphi_\alpha^*$ takes around $5$ms per instance for $N=100$ for precision $\epsilon_t = 1$e$\shortminus 6$.

\begin{remark}
We did not display the results for Bernstein's bound, as it is known that this inequality is strictly looser than Bennett's inequality~\cite{Bennett_1962}, see e.g.~\cite{Jebara_2018} for a proof.
\end{remark}

\begin{figure*}[!ht]
\centering
\begin{subfigure}{.99\textwidth}
  	\centering
	\includegraphics[width = 0.85\linewidth, clip=true,trim = 0.4cm 0.cm 1.8cm 1.2cm] {./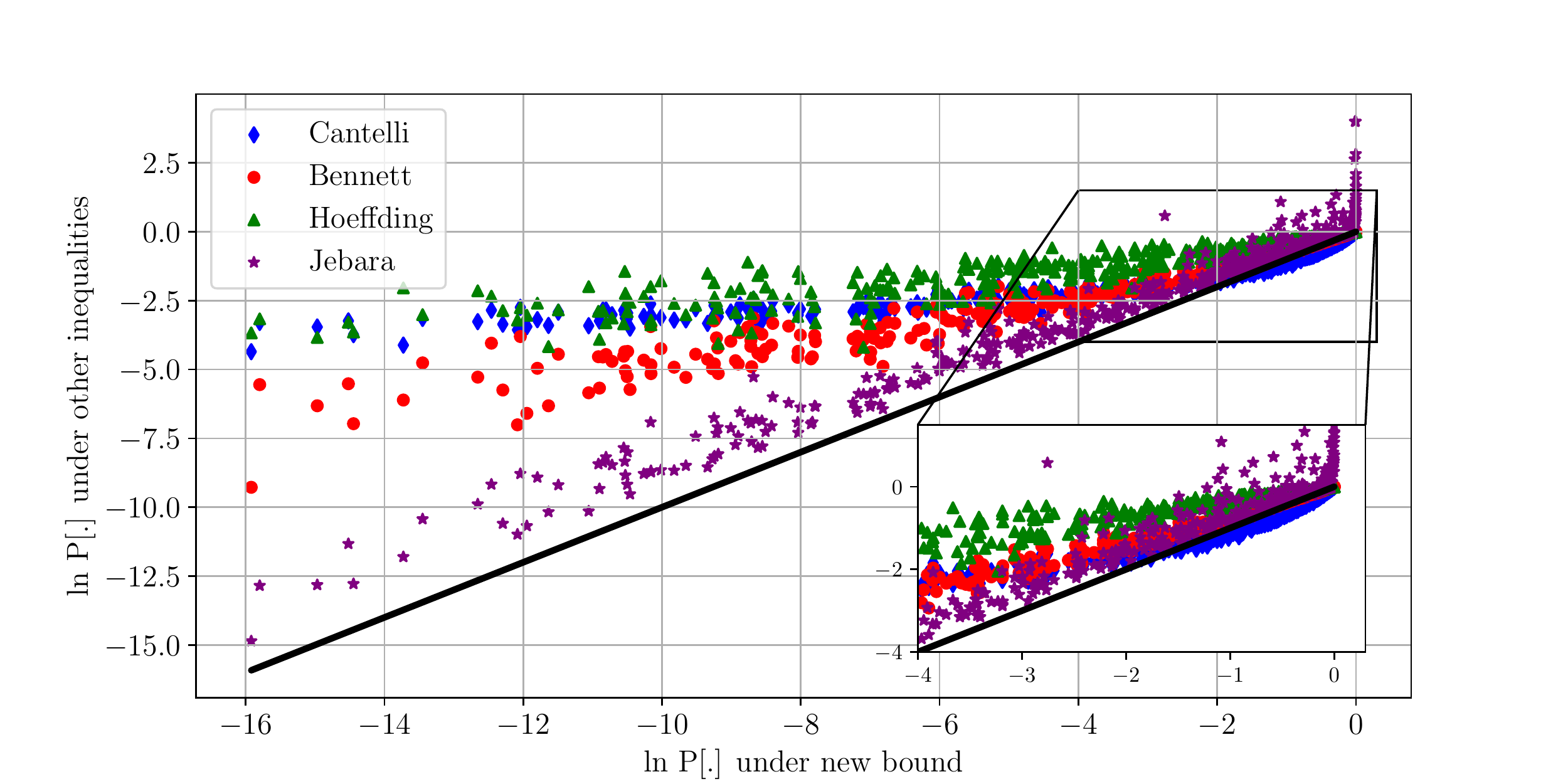}
	\caption{Random instances, made of $N=10$ heterogeneous variables}
	\label{fig::res_10}
\end{subfigure}\\
\begin{subfigure}{.99\textwidth}
  	\centering
	\includegraphics[width = 0.85\linewidth, clip=true,trim = 0.4cm 0.cm 1.8cm 1.2cm] {./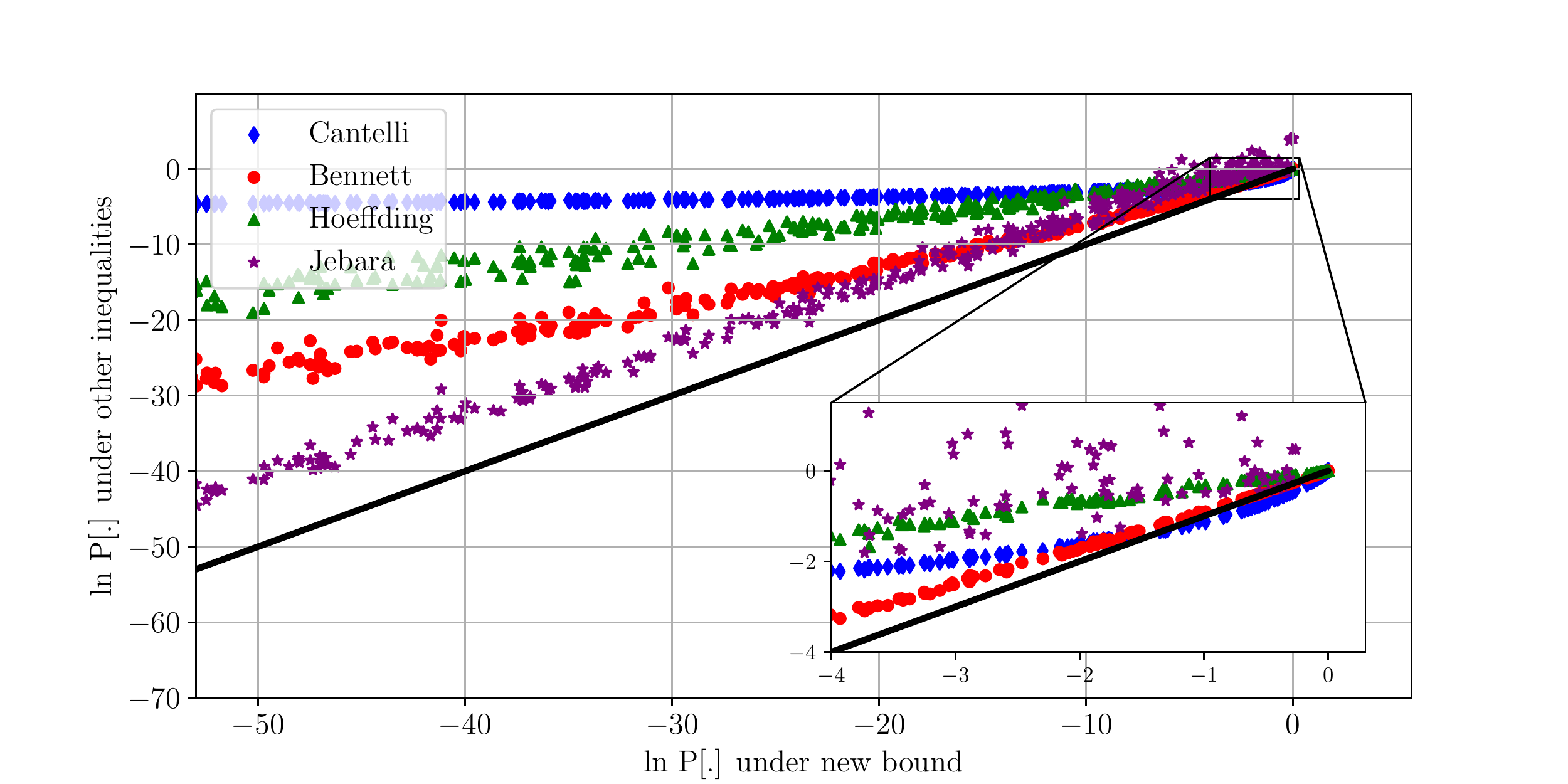}
	\caption{Random instances, made of $N=100$ heterogeneous variables}
	\label{fig::res_50}
\end{subfigure}
\caption{Comparison of four bounds with $\varphi^*_\alpha$.\\
 If the marker is 
above the line, then the method gives looser bound compared to $\varphi_\alpha^*$.}
\label{fig::res}
\end{figure*}

\section{Computing confidence bounds}\label{sec::computing}

In this section, we aim to derive a confidence bound $\alpha_\tau$ with a given maximum probability error $\tau$ such that $\varphi^*_{\alpha_\tau} = \ln(\tau)$.
We first prove that the confidence bound we obtain always provides useful information, as $\alpha_\tau$ is strictly lower than $\bmean$ for $\tau\geq\taumin$ (for probability error less than $\taumin$, we can only certify a confidence bound of $\bmean$) :
\begin{lemma}\label{prop::alpha_lower_b}
Suppose that $\tau \in ]\taumin,1]$, then 
$$\alpha_\tau \leq \bmean - \sqrt{\frac{1}{N\Gamma}\ln(\tau/\tau^-)}\enspace,$$
where $\Gamma = 1+(\min\{\gamma_k\}\min\{b_k(\gamma_k+1)\})^{\shortminus 1}$.
\end{lemma}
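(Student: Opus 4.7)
By Lemma~\ref{lemma::prop_on_varphi_star}(i), the map $\alpha\mapsto\varphi^*_\alpha$ is strictly decreasing on $[0,\bmean]$, so the claimed inequality is equivalent to $\varphi^*_{\alpha^*}\leq\ln\tau$ at the test point $\alpha^*:=\bmean-\sqrt{\ln(\tau/\tau^-)/(N\Gamma)}$. Since $\varphi^*_{\alpha^*}$ is itself an infimum, it suffices to exhibit a single witness $t_0\geq 0$ for which $\varphi_{\alpha^*}(t_0)\leq\ln\tau$. Writing $\beta^*:=\bmean-\alpha^*=\sqrt{\ln(\tau/\tau^-)/(N\Gamma)}$ and $u_k:=b_k(1+\gamma_k)$, the target reduces, via the explicit form~\eqref{eq::varphi_2}, to
\[
  N t_0\beta^* + \sum_{k=1}^N \ln\!\bigl(1+\gamma_k^{-1}e^{-t_0 u_k}\bigr)\ \leq\ \ln(\tau/\tau^-)\ =\ N\Gamma (\beta^*)^2.
\]

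My plan for the log-sum is to apply $\ln(1+x)\leq x$ termwise together with the uniform lower bounds $\gamma_k\geq\min_k\gamma_k$ and $u_k\geq u_{\min}:=\min_k b_k(1+\gamma_k)$, obtaining $\sum_k \ln(1+\gamma_k^{-1}e^{-t_0 u_k})\leq (N/\min_k\gamma_k)\,e^{-t_0 u_{\min}}$. An alternative is to Taylor-expand $\varphi_{\alpha^*}$ around $t=0$ using the uniform second-derivative bound $\varphi''_\alpha\leq M$ from Lemma~\ref{corol::prop_on_varphi}(ii), which yields the quadratic estimate $\varphi_{\alpha^*}(t)\leq -N\alpha^* t + Mt^2/2$. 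In either case one then calibrates $t_0$ to be of order $\beta^*$ (for instance $t_0=\Gamma\beta^*$) so that the linear term $Nt_0\beta^*$ and the residual exponential or quadratic piece each fit within the budget $N\Gamma (\beta^*)^2$. The two summands defining $\Gamma = 1 + (\min_k\gamma_k\cdot u_{\min})^{-1}$ are then absorbed naturally --- the ``$1$'' matching the linear contribution and the $(\min_k\gamma_k\cdot u_{\min})^{-1}$ matching the exponential remainder.

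The main obstacle is this calibration of $t_0$: it is the balance between the linear and the exponential contributions that forces the $\sqrt{\cdot}$-scaling of the bound, and any loose treatment of the remainder inflates the constant $\Gamma$. A secondary subtlety is that replacing each $(\gamma_k,u_k)$ by its worst-case value $\min_k\gamma_k$, $u_{\min}$ discards information and yields an inequality that may be tight only when these minima are attained simultaneously; one should verify that no further structural assumption is hidden in this reduction. Finally, the claimed bound is vacuous when $\alpha^*<0$, i.e.\ beyond the regime $\ln(\tau/\tau^-)\leq N\Gamma\bmean^2$; this boundary case reduces to the trivial bound $\alpha_\tau\leq\bmean$ already provided by Lemma~\ref{lemma::prop_on_varphi_star}(i) and should be acknowledged at the outset.
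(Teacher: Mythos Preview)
Your reduction to exhibiting a single witness $t_0$ and your plan to bound the log--sum via $\ln(1+x)\leq x$ together with the worst-case replacements $\gamma_k\to\min_k\gamma_k$, $u_k\to u_{\min}$ are exactly what the paper does. The genuine gap is the calibration of $t_0$.

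Taking $t_0=\Gamma\beta^*$ --- or any $t_0$ of order $\beta^*$ --- cannot work. The linear contribution alone is then $Nt_0\beta^*=N\Gamma(\beta^*)^2$, which already equals the entire budget $\ln(\tau/\taumin)$, while the exponential remainder $(N/\min_k\gamma_k)\,e^{-t_0u_{\min}}$ is strictly positive and, for small $\beta^*$ (i.e.\ $\tau$ close to $\taumin$), is of order $N/\min_k\gamma_k$, not $O\!\bigl((\beta^*)^2\bigr)$. So the split of $\Gamma$ into a ``linear'' part $1$ and an ``exponential'' part $(\min_k\gamma_k\cdot u_{\min})^{-1}$ cannot be realised with this scaling. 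Your Taylor alternative around $t=0$ has the same defect in a different guise: it yields a bound of the form $-N\alpha^*t+Mt^2/2$, which controls $\varphi^*_{\alpha^*}$ in terms of $\alpha^*$ rather than $\bmean-\alpha^*$, and does not reproduce the stated $\bmean-\sqrt{\cdot}$ shape.

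The paper chooses a \emph{large} witness, $t_0=(\beta^*)^{-1/2}$, so that the exponential term is driven down before it is estimated. After $\ln(1+x)\leq x$ this gives
\[
  \varphi_{\bmean-\beta^*}(t_0)\ \leq\ \ln\taumin\;+\;N\sqrt{\beta^*}\;+\;\frac{N}{\min_k\gamma_k}\,e^{-u_{\min}/\sqrt{\beta^*}}\,,
\]
and the extra ingredient you are missing is the elementary bound $e^{-x}\leq(1+x)^{-1}$, applied with $x=u_{\min}/\sqrt{\beta^*}$, which converts the last term into at most $\dfrac{N}{\min_k\gamma_k}\cdot\dfrac{\sqrt{\beta^*}}{\sqrt{\beta^*}+u_{\min}}\leq\dfrac{N}{\min_k\gamma_k\,u_{\min}}\,\sqrt{\beta^*}$. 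It is at this point that the two summands of $\Gamma$ genuinely appear: both the linear and the exponential pieces are now proportional to $\sqrt{\beta^*}$, with coefficients $1$ and $(\min_k\gamma_k\cdot u_{\min})^{-1}$ respectively, giving $\varphi_{\bmean-\beta^*}(t_0)\leq\ln\taumin+N\Gamma\sqrt{\beta^*}$.
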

\begin{proof}
Let $\alpha_\kappa = \bmean - \kappa$ et $t = \frac{1}{\sqrt{\kappa}}$. Then,
$$
\begin{aligned}
\varphi_{\alpha_\kappa}(t) &= \ln(\taumin) + N\sqrt{\kappa} + \sum_{k=1}^N \ln\left(1+\gamma_k^{\shortminus 1} e^{-\frac{1}{\sqrt{\kappa}}b_k(\gamma_k+1)}\right)\\
&\leq \ln(\taumin) + N\sqrt{\kappa} + \sum_{k=1}^N \gamma_k^{\shortminus 1} e^{-\frac{1}{\sqrt{\kappa}}b_k(\gamma_k+1)}\\
&\leq \ln(\taumin) + N\left[\sqrt{\kappa} + (\min\{\gamma_k\})^{\shortminus 1} e^{-\frac{1}{\sqrt{\kappa}}\min\{b_k(\gamma_k+1)\}}\right]\\
&\leq \ln(\taumin) + N\left[\sqrt{\kappa} + \frac{(\min\{\gamma_k\})^{\shortminus 1}\sqrt{\kappa}}{\sqrt{\kappa}+\min\{b_k(\gamma_k+1)\}}\right]\leq \ln(\taumin) + N\Gamma\sqrt{\kappa}\enspace.
\end{aligned}
$$
The last inequality is obtained using $e^{-x}\leq (1+x)^{\shortminus 1}$ for $x>0$.
Therefore, for $\kappa^2 \leq \frac{1}{N\Gamma}\ln(\tau/\tau^-)$, $\varphi_{\alpha_\kappa}(t)\leq \ln(\tau)$ and as a consequence $\varphi^*_{\alpha_\kappa} \leq \ln(\tau) = \varphi^*_{\alpha_\tau}$.
As $\alpha\mapsto\varphi^*_\alpha$ is decreasing, we obtain that $\alpha_\tau \leq \alpha_\kappa$.
\end{proof}
Note that, under its apparent simplicity, this property does not hold for other bound such that Hoeffding or Bennett.

\paragraph{Double bisection search algorithm.}
We now present a fast algorithm to compute $\alpha_\tau$ introduced in~\Cref{prop::main_result}. 
\Cref{algo::refined_bound} consists of two nested bisection searches. The inner one is dedicated to find the minimum in $t$ --  see~\Cref{algo::refined_prob_bound} -- to an arbitrary precision $\epsilon_t$ and, as a consequence, to compute $\varphi^*$ to a precision $M\epsilon_t^2$. This estimation of $\varphi^*$ constitutes the oracle for the outer bisection search. Therefore, the test is more elaborated as it checks whether the decision is sure or not: we only reduce the space by half when the oracle returns value far enough from the target $\ln(\tau)$ i.e., with a distance greater than $M\epsilon_t^2$. If not, then, it means that we obtain at a certain iteration an estimation close enough to $\ln(\tau)$, so we stop at this point. This outer bisection search is a particular case of Probabilistic Bisection Algorithm~(PBA)~\cite{Horstein_1963,Waeber_2013}, where the error term is not necessarily of zero mean but takes values in a small bounded interval.
\begin{algorithm}[!ht]
\caption{Double Bisection Search for confidence bound's computation}\label{algo::refined_bound}
\begin{algorithmic}
\Require $\tau$, $N$, $b_k$, $\sigma_k$, $\epsilon_t$, $\epsilon_\alpha$
\State $\alpha^-,\;\alpha^+\gets 0,\;\bmean - \sqrt{\ln(\tau/\tau^-)\,/\,(N\Gamma)}$ \Comment{Init $\alpha$-bisection}
\State $tol\gets \texttt{false}$
\While{$\alpha^+ - \alpha^- > \epsilon_\alpha$ or $tol = \texttt{false}$} 
  \State $\hat{\alpha} \gets \frac{1}{2}(\alpha^- + \alpha^+)$
  \State $t^-,t^+\gets 0,\shortminus\frac{1}{N(\bmean - \hat{\alpha})}\ln(\tau^-)$ \Comment{Init $t$-bisection}
  \While{$t^+ - t^- > \epsilon_t$ }
     \State $\hat{t} \gets \frac{1}{2}(t^- + t^+)$
     \State $g \gets \frac{d}{dt} \varphi_\alpha(\hat{t})$
      \IfThenElse{$g\geq 0$}
        {$t^+ \gets \hat{t}$}
        {$t^- \gets \hat{t}$}
  \EndWhile
  \State $\hat{\varphi} \gets \varphi_{\hat{\alpha}}(\hat{t})$
  \If{$\hat{\varphi} > \ln(\tau) + M\epsilon_t^2$}
    $\alpha^- \gets \hat{\alpha}$
  \ElsIf{$\hat{\varphi} < \ln(\tau) - M\epsilon_t^2$}
    $\alpha^+ \gets \hat{\alpha}$
  \Else
     $\;tol \gets \texttt{true}$
  \EndIf
\EndWhile
~\\\Return $\hat{\alpha}$
\end{algorithmic}
\end{algorithm}

\paragraph{Termination guarantees.}
The following proposition proves that this algorithm is fast (log convergence) and provides a solution arbitrary close to the optimal solution.
\begin{theorem}\label{prop::convergence_algo}
Let $\tau \in ]\taumin,1]$.
\Cref{algo::refined_bound} ends with a value $\hat{\alpha}$ such that
\begin{equation}
\left|\hat{\alpha} - \alpha_\tau\right| \leq 
\epsilon_\alpha
\,\wedge\,
\sqrt{\frac{2M}{N\min\{m_k\}}}\epsilon_t
\,\wedge\,
\frac{2M}{N\min\{b_km_k\}} \epsilon_t^2
\enspace ,
\end{equation}
where $m_k := \frac{\ln(2+\gamma_k^{\shortminus 1})}{b_k^2 (1+\gamma_k)}$.
Moreover, the total number of iterations $I_\tau$ is bounded:
$$I_\tau\leq
\left\lceil\log_2\left(\frac{\bmean}{\epsilon_\alpha}\right)\right\rceil
\left\lceil\log_2\left(\frac{\sqrt{\Gamma}\ln(1/\tau^-)}
{\epsilon_t \sqrt{N\ln(\tau/\tau^-)}}\right)\right\rceil
\enspace.$$

\end{theorem}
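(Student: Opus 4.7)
The plan is to analyze the two nested bisections separately, maintain a correctness invariant for each, and then combine the resulting precision guarantees.

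\textbf{Inner bisection.} For fixed $\hat\alpha$, I will use the strict convexity of $\varphi_{\hat\alpha}$ (\Cref{corol::prop_on_varphi}): the sign of $\varphi'_{\hat\alpha}(\hat t)$ identifies which side of $\hat t$ the minimizer lies on, so the sign test preserves the invariant $t^*_{\hat\alpha} \in [t^-,t^+]$. Since \Cref{lemma::prop_on_varphi_star}(ii) places $t^*_{\hat\alpha}$ in $[0, -\ln(\tau^-)/(N(\bmean-\hat\alpha))]$ (nonempty because $\hat\alpha \leq \alpha^+ < \bmean$), the initialization is valid and halving is well-defined. After $\lceil \log_2(\text{width}_0/\epsilon_t) \rceil$ iterations the interval has width $\leq \epsilon_t$, giving $|\hat t - t^*_{\hat\alpha}| \leq \epsilon_t$. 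A second-order Taylor expansion at $t^*_{\hat\alpha}$, combined with the uniform bound $\varphi''_{\hat\alpha} \leq M$ (\Cref{corol::prop_on_varphi}(ii)), then yields $0 \leq \hat\varphi - \varphi^*_{\hat\alpha} \leq M\epsilon_t^2$.

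\textbf{Outer bisection.} Next I maintain the invariant $\alpha_\tau \in [\alpha^-, \alpha^+]$. It holds at initialization because $\varphi^*_0 = 0 > \ln\tau$ (\Cref{lemma::prop_on_varphi_star}(i)) gives $\alpha_\tau \geq 0 = \alpha^-$, and \Cref{prop::alpha_lower_b} gives $\alpha_\tau \leq \alpha^+$. The tolerance $M\epsilon_t^2$ in the update tests provides exactly the slack required to compare $\hat\varphi$ with $\ln\tau$ safely: if $\hat\varphi > \ln\tau + M\epsilon_t^2$, then $\varphi^*_{\hat\alpha} \geq \hat\varphi - M\epsilon_t^2 > \ln\tau = \varphi^*_{\alpha_\tau}$, and the strict monotonicity of $\alpha \mapsto \varphi^*_\alpha$ (\Cref{lemma::prop_on_varphi_star}(i)) forces $\hat\alpha < \alpha_\tau$, validating $\alpha^- \gets \hat\alpha$; the other branch is symmetric.

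\textbf{Error bounds.} Two exit configurations are possible: either the width test triggers, in which case $|\hat\alpha - \alpha_\tau| \leq \alpha^+ - \alpha^- \leq \epsilon_\alpha$ delivers the first bound; or $tol = \texttt{true}$, which combined with the inner error gives $|\varphi^*_{\hat\alpha} - \varphi^*_{\alpha_\tau}| \leq 2M\epsilon_t^2$. In the latter case, the conversion to a bound on $|\hat\alpha - \alpha_\tau|$ relies on \Cref{lemma::prop_on_varphi_star}(iii) together with the envelope identity $d\varphi^*_\alpha/d\alpha = -Nt^*_\alpha$ (an immediate consequence of the linear dependence of $\varphi_\alpha$ on $\alpha$). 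The main obstacle, and the technical heart of the proof, will be deriving two sharp lower bounds on $t^*_\alpha$ matching the precise form announced: starting from the 1D fact $t^*_\alpha \geq \min_k t^{(k)}_\alpha$ (minimizer of a sum of strictly convex functions lies in the convex hull of the individual minimizers, already invoked in \Cref{lemma::prop_on_varphi_star}(ii)) and the explicit formula for $t^{(k)}_\alpha$, I will extract a constant lower bound $t^*_\alpha \geq \min_k b_k m_k$ that feeds directly into \Cref{lemma::prop_on_varphi_star}(iii) and yields the $\epsilon_t^2$ bound, and an affine-in-$\alpha$ lower bound of the form $t^*_\alpha \gtrsim m_k\alpha$ which, after integration along $[\alpha_\tau, \hat\alpha]$ via the envelope identity, produces a quadratic lower bound $|\varphi^*_{\hat\alpha} - \varphi^*_{\alpha_\tau}| \geq N\min_k m_k \cdot (\hat\alpha - \alpha_\tau)^2$, hence the $\epsilon_t$ bound after taking a square root.

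\textbf{Iteration count.} Finally, the bound on $I_\tau$ is a standard halving argument: the outer loop has initial width at most $\bmean$ and halts at $\epsilon_\alpha$, giving $\lceil \log_2(\bmean/\epsilon_\alpha) \rceil$ outer iterations; each inner call starts from width $\ln(1/\tau^-)/(N(\bmean - \hat\alpha))$, which \Cref{prop::alpha_lower_b} bounds from above by $\sqrt{\Gamma}\ln(1/\tau^-)/\sqrt{N\ln(\tau/\tau^-)}$, contributing the second logarithmic factor. Multiplying the two yields the stated bound on $I_\tau$.
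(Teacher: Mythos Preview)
Your plan matches the paper's proof in structure and in every key ingredient: the $M\epsilon_t^2$ inner-loop error (the paper obtains it via two mean-value theorems rather than a Taylor expansion), the outer invariant $\alpha_\tau\in[\alpha^-,\alpha^+]$ preserved by the tolerance-guarded sign test, the two-case exit analysis, and the product iteration bound via \Cref{prop::alpha_lower_b}. The one small deviation is your derivation of the $\sqrt{2M/(N\min_k m_k)}\,\epsilon_t$ bound by integrating the envelope identity $d\varphi^*_\alpha/d\alpha=-Nt^*_\alpha$ against the affine lower bound $t^*_\alpha \geq \alpha\min_k m_k$; the paper instead inserts that same affine bound directly into \Cref{lemma::prop_on_varphi_star}(iii), writing $\min(t^*_{\alpha_\tau}, t^*_{\hat\alpha}) \geq |\hat\alpha-\alpha_\tau|\min_k m_k$, but both routes rest on the same logarithmic-concavity estimate applied to the explicit $t^{(k)}_\alpha$ of \Cref{lemma::prop_on_varphi_star}(ii). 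One caution on your variant: straight integration yields $\tfrac{Nm}{2}(\hat\alpha^2-\alpha_\tau^2)\geq \tfrac{Nm}{2}|\hat\alpha-\alpha_\tau|^2$, not the $Nm|\hat\alpha-\alpha_\tau|^2$ you wrote, so as stated your constant is off by a factor $\sqrt{2}$.
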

\begin{proof}
At the end of the algorithm, one obtain from the inner bisection that $t^-\leq t^*_{\hat{\alpha}},\hat{t}\leq t^+$ and $|t^+ - t^-|\leq \epsilon_t$. Suppose that the algorithm ends with a value $\hat{\alpha}$ and $\hat{\varphi} = \varphi_{\hat{\alpha}}(\hat{t})$. 
Then, from the mean-value theorem, there exists $t\in[t^-,t^+]$ such that 
$$\left| \frac{d}{dt}\varphi_\alpha(t^-) - \frac{d}{dt}\varphi_\alpha(t^+) \right| = \left|\frac{d^2}{dt^2}\varphi_\alpha(t)\right| (t^+ - t^-) \leq M \epsilon_t\enspace.$$
As the derivative of $\varphi_\alpha$ is decreasing and positive (resp. negative) in $t^-$ (resp. $t^+$), $|\frac{d}{dt}\varphi_\alpha(t)| \leq M\epsilon_t$ for all $t\in[t^-,t^+]$, and using once again the mean-value theorem,
$$
\left|\hat{\varphi} - \varphi^*_{\hat{\alpha}}\right| \leq M\epsilon_t^2 \enspace.
$$
\underline{$1^{\textnormal{st}}$ case}: the algorithm ends with $tol=\texttt{true}$.\\
Therefore (criteria), $\left|\hat{\varphi} - \varphi^*_{\alpha_\tau}\right| \leq M\epsilon_t^2$, and so
$
\left|\varphi^*_{\hat{\alpha}} - \varphi^*_{\alpha_\tau}\right| \leq 2M\epsilon_t^2 \enspace.
$
Using~\Cref{lemma::prop_on_varphi_star}, item (iii), we obtain 
$$
\left|\hat{\alpha} - \alpha_\tau\right| \leq \frac{2 M \epsilon_t^2}
{N\min\{t^*_{\alpha_{\tau}},t^*_{\hat{\alpha}}\}}
\enspace.
$$
Then, using~\Cref{lemma::prop_on_varphi_star}, item (ii), for all $\alpha$,
$$
t^*_\alpha \geq \min_{k|b_k>\alpha} \left\{\frac{1}{b_k(1+\gamma_k)} \ln\left(\frac{\alpha + b_k \gamma_k}{\gamma_k (b_k - \alpha)}\right)\right\} \enspace.
$$
By concavity, $\ln(1+x) \geq \ln(1+z)\min\{x/z,1\}$ for all $x,z \geq 0$. Therefore, for all $k$ such that $b_k>\alpha$ (it exists otherwise $\alpha> \overline{b}$), we obtain:
$$\ln\left(\frac{\alpha + b_k \gamma_k}{\gamma_k (b_k - \alpha)}\right) = \ln\left(1+\frac{\alpha (1+\gamma_k)}{\gamma_k(b_k-\alpha)}\right)\geq \ln\left(2+\frac{1}{\gamma_k}\right)\min\{\frac{\alpha}{b_k-\alpha},1\} \geq \frac{1}{b_k}\ln\left(2+\frac{1}{\gamma_k}\right)\min\{\alpha,b_k\} 
\enspace.$$
Then,
$
t^*_\alpha \geq \alpha \min_{k}\{m_k\}\,\vee\,\min_k\{b_k m_k\}
$.
Therefore, as $\hat{\alpha}$ and $\alpha_{\tau}$ are positive quantities,
$$
\begin{aligned}
t^*_{\alpha_{\tau}}\,\vee\,t^*_{\hat{\alpha}} &\geq \alpha_{\tau} \min_{k}\{m_k\}\,\vee\, \hat{\alpha} \min_{k}\{m_k\}\,\vee\,\min_k\{b_k m_k\}\\
&\geq \left|\hat{\alpha} - \alpha_{\tau}\right|\min_{k}\{m_k\}\,\vee\,\min_k\{b_k m_k\} 
\enspace.
\end{aligned}
$$
Finally, $$
\left|\hat{\alpha} - \alpha_\tau\right| \leq 
\max\left\{
\sqrt{\frac{2M}{N\min\{m_k\}}}\epsilon_t,
\frac{2M}{N\min\{b_km_k\}} \epsilon_t^2
\right\}\enspace.
$$
\underline{$2^{\textnormal{nd}}$ case}: the algorithm ends with $\left|\alpha^+ - \alpha^-\right|\leq \epsilon_\alpha$.\\
Then, as $tol=\texttt{false}$, at each iteration, $\left|\hat{\varphi} - \ln(\tau)\right| \geq M\epsilon_t^2$, and so $\alpha_\tau$ lies in $[\alpha^-,\alpha^+]$.
Therefore, $\left|\hat{\alpha} - \alpha_\tau\right| \leq \left|\alpha^+ - \alpha^-\right|\leq \epsilon_\alpha$.

~\\Besides, denoting by $I_t$ the number of iterations for the inner bisection search,
we have 
$$I_t \leq \left\lceil\log_2\left(\frac{-\ln(\tau^-)}{N(\bmean - \alpha)\epsilon_t}\right)\right\rceil\enspace.$$
Then, as $\bmean - \alpha \geq \sqrt{\frac{1}{N\Gamma}\ln(\tau/\tau^-)}$ (see~\Cref{prop::alpha_lower_b}), 
$I_t \leq \left\lceil\log_2\left(\frac{\sqrt{\Gamma}\ln(1/\tau^-)}{\sqrt{N\ln(\tau/\tau^-))}\epsilon_t}\right)\right\rceil$.
Furthermore, denoting by $I_\alpha$ the number of iterations for the outer bisection search,
we have
$$I_\alpha \leq \left\lceil\log_2\left(\frac{\bmean}{\epsilon_\alpha}\right)\right\rceil\enspace.$$
\end{proof}

\Cref{prop::convergence_algo} proves that~\Cref{algo::refined_bound} is fast (log convergence), and provides a solution with an arbitrary precision. Note that the number of iterations is impacted by the distance of $\tau$ from the minimal value $\tau^-$. In fact, very close to $\tau^-$, the minimizer $t^*_\alpha$ tends to $+\infty$, and therefore the width of the bisection search space becomes large. Nonetheless, for reasonable error tolerance $\tau$, the algorithm takes very few iterations. Meanwhile, the precision of $\hat{\alpha}$ does not depend on $\tau$.

\paragraph{Numerical experiments.}

We use the instances developed in~\Cref{tab::random_var_comp}. 
The results are depicted in~\Cref{fig::confidence_results} for 1000 realizations, and are fast to obtain (few seconds in total). Of course, the  confidence bounds we obtain are greater than the value computed with normal distributions, and so all values are greater than 1. We recover the superiority of the studied bound compared to the standard Bennett's inequality. Besides, Chebyshev-Cantelli's bound is only valuable for low probability level, and becomes inefficient for probabilities close to 1.

\begin{figure}[!ht]
  \centering
  \includegraphics[width=0.75\linewidth,clip=true, trim = 1.cm 0.1cm 1.5cm 1.5cm]{./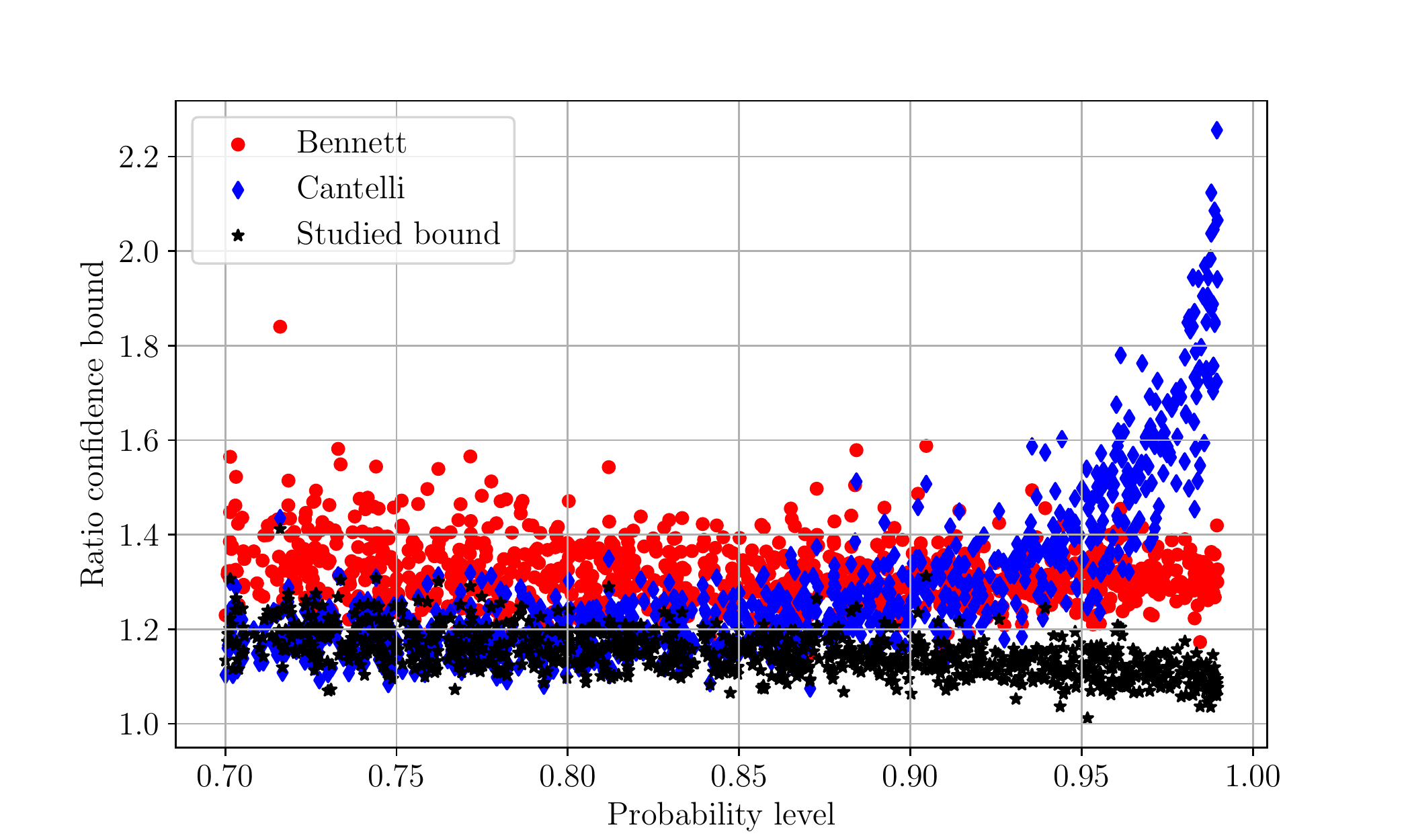}
  \caption{Random instances, made of $N=10$ heterogeneous variables. \\For different probability levels ($1-\tau$) and different inequalities, we display the value of the confidence bound normalized by the normal case, i.e., the (exact) value for normal distributions. }
  \label{fig::confidence_results}
\end{figure}

\section{Application to Chance-Constrained Programming}\label{sec::application}

In the two next subsections, we will use the proposed concentration inequality~\eqref{eq::pos_sum}-\eqref{eq::qq_sum} into CCP problems of the form~\eqref{eq::CCP} with \emph{individual} bilinear constraints of the form $g(x,\xi) = \xi^T x$ - $b$, with $b\in\bbR$. The first application (knapsack problems) is of a combinatorial nature and contains only one chance constraint whereas the second one (Support Vector Machine problems) is a continuous problem but contains as many chance constraints as training points.

\subsection{Chance-constrained binary Knapsack problem}\label{sec::knapsack}
Let us consider the Knapsack problem with random weights, stated as a chance-constrained problem:

\begin{equation}\label{eq::CKP}\tag{CKP}
\begin{aligned}
\max_{y \in\{0,1\}^N} &\;\pi^T y\\
\text{s.t}\quad&\; \Prob\left[\omega^T y \geq C\right]\leq \tau
\end{aligned}
\end{equation}
where $\pi\in\bbR^N$ denotes the utility of each item, $\omega\in \bbR^N$ denotes the random weights of each item, and $C\in \bbR$ is the maximum budget.

If $\omega_k$ follows a normal distribution $\mathcal{N}(\overline{\omega}_k,\sigma_k)$, the chance constraint can be computed \emph{exactly} by the following problem, see e.g.~\cite[Theorem 10.4.1]{Prekopa_1995} :
\begin{equation}\tag{$\text{CKP-N}$}
\begin{aligned}
\max_{y \in\{0,1\}^N} &\;\pi^T y\\
\text{s.t}\quad&\;  \Phi^{\shortminus 1}(1-\tau)\sqrt{ y^T \Sigma y} + \overline{\omega}^T y\leq C
\end{aligned}
\label{eq::CKP-N}
\end{equation}
where $\Phi$ is the cumulative distribution for the standard normal distribution.
In this specific setting, Han et al.~\cite{Han_2015} provides efficient algorithms to obtain robust solutions.

Here, we focus on random weights whose distributions are unknown, but where the two first moments are available, as well as upper bounds.
This distributionally robust approach has been firstly studied by Calafiore and El Ghaoui~\cite{Calafiore_2006}, where they focused on Hoeffding-type approximations. Recently, Ryu and Park~\cite{Ryu_2021} proposed to repeatedly solve ordinary binary knapsack subproblems to deduce bounds on SOCP approximations (such as Chebyshev-Cantelli). As we will compare the different approximations in the sequel, we first recall the next two classical results: let us define $B = \diag(b^2)$ and $\Sigma = \diag(\sigma^2)$, then
\begin{enumerate}[label=(\roman*)]
\item (Hoeffding) the problem~\eqref{eq::CKP-H} is a valid conservative approximation of~\eqref{eq::CKP}
\begin{equation}\tag{$\text{CKP-H}$}
\begin{aligned}
\max_{y \in\{0,1\}^N} &\;\pi^T y\\
\text{s.t}\quad&\;  \sqrt{2\ln(1/\tau)}\sqrt{ y^T B y} + \overline{\omega}^T y\leq C
\end{aligned}
\label{eq::CKP-H}
\end{equation}

\item (Chebyshev-Cantelli) the problem~\eqref{eq::CKP-C} is a valid conservative approximation of~\eqref{eq::CKP}
\begin{equation}\tag{$\text{CKP-C}$}
\begin{aligned}
\max_{y \in\{0,1\}^N} &\;\pi^T y\\
\text{s.t}\quad&\;  \sqrt{\tfrac{1}{\tau} - 1}\sqrt{ y^T \Sigma y} + \overline{\omega}^T y\leq C
\end{aligned}
\label{eq::CKP-C}
\end{equation}
\end{enumerate}
This comparison is inspired by the work of Peng, Maggioni and Lisser~\cite{Peng_2022} where first-order bounds (Hoeffding and an approximation of Bernstein bound) are used in the continuous knapsack problem, and compared to exact SOCP relaxation for normal variables. 

Using~\cref{prop::mgf} with $X_k := \omega_k$ and $d:= C - \overline{\omega}^T y$, we obtain (tighter) conservative approximation. A lower bound of~\eqref{eq::CKP} is then obtained by solving:
\begin{equation}\tag{$\overline{\text{CKP}}$}
\begin{aligned}
\max_{\substack{y \in\{0,1\}^N\\t\geq 0}} &\;\pi^T y\\
\text{s.t}\quad&\;  t\left[\overline{\omega}^Ty-C\right]
+ \sum_{k=1}^N \ln\left(\frac{\gamma_k e^{y_k b_k}+ e^{-t y_k b_k \gamma_k}}{1+\gamma_k}\right)\leq \ln(\tau)
\end{aligned}
\label{eq::relax_CKP}
\end{equation}
The constraint contains bilinear terms $ty_k$. A naïve approach could be to consider a Fortet linearization of the bilinear terms, see e.g.~\cite{Fortet_1960}. Here, using the structure of the constraint, we succeed in reformulating the constraint. Let us consider the change of variable $z := 1/t$ and divide the constraint by $t$:
\begin{equation}
\eqref{eq::relax_CKP} \iff \left\{
\begin{aligned}
\max_{\substack{y \in\{0,1\}^N\\z\geq 0}} &\;\pi^T y\\
\text{s.t}\quad&\;  \overline{\omega}^Ty
+ \sum_{k=1}^N z\ln\left(\frac{\gamma_k e^{\frac{y_k}{z} b_k}+ e^{-\frac{y_k}{z} b_k\gamma_k}}{1+\gamma_k}\right)\leq C + z\ln(\tau)
\end{aligned}\right.
\end{equation}
\begin{prop}\label{prop::convex_knapsack}
For every $\gamma,b\geq 0$, the function $\Psi^+_{\gamma,b}:\bbR\times\bbR_+\to \bbR_+$, defined as,
$$\Psi^+_{\gamma,b}(y,z) := z\ln\left(\frac{\gamma e^{\frac{y}{z} b}+ e^{-\frac{y}{z} b\gamma}}{1+\gamma}\right)\enspace,$$ is jointly convex. Therefore, the problem~\eqref{eq::relax_CKP} -- without the integrity condition -- is convex.
\end{prop}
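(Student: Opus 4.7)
The plan is to recognize $\Psi^+_{\gamma,b}$ as the perspective of a one-dimensional function and then check that that function is convex via the log-sum-exp trick.

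Concretely, I would define
\[
h_{\gamma,b}(u) := \ln\!\left(\frac{\gamma e^{ub} + e^{-ub\gamma}}{1+\gamma}\right),
\]
so that $\Psi^+_{\gamma,b}(y,z) = z\, h_{\gamma,b}(y/z)$ for $z>0$. Since the standard perspective operation $(y,z)\mapsto z\,h(y/z)$ preserves convexity (and is defined on $\mathbb{R}\times\mathbb{R}_{>0}$, which matches the domain of the constraint), it is enough to show that $h_{\gamma,b}$ is convex on $\mathbb{R}$. One then extends $\Psi^+_{\gamma,b}$ by lower semi-continuity at $z=0$ in the standard way.

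For the convexity of $h_{\gamma,b}$, I would drop the additive constant $-\ln(1+\gamma)$ and rewrite
\[
\ln\!\bigl(\gamma e^{ub}+e^{-ub\gamma}\bigr)
= \ln\!\bigl(e^{\ln\gamma+ub}+e^{-ub\gamma}\bigr).
\]
This is the log-sum-exp function $\mathrm{lse}(v_1,v_2)=\ln(e^{v_1}+e^{v_2})$, which is a textbook convex function, composed with the affine map $u\mapsto(\ln\gamma+ub,\,-ub\gamma)$. Since convexity is preserved under affine precomposition, $h_{\gamma,b}$ is convex. (A direct second-derivative check also works, but the log-sum-exp viewpoint makes it immediate and is the cleanest step.)

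Finally, to deduce convexity of the relaxed problem, I would observe that after the change of variables $z=1/t$, the constraint reads
\[
\overline{\omega}^{T}y \;+\; \sum_{k=1}^N \Psi^+_{\gamma_k,b_k}(y_k,z) \;\leq\; C + z\ln(\tau).
\]
The left-hand side is a sum of a linear term in $y$ and jointly convex terms in $(y_k,z)$, hence jointly convex in $(y,z)$; the right-hand side is affine in $z$. So the feasible set (together with $y\in[0,1]^N$ and $z\ge 0$) is convex, and the objective $\pi^{T}y$ is linear, giving a convex program. The only subtlety worth flagging is the boundary behavior at $z=0$, which is handled by the usual perspective-closure convention; beyond that, the argument is essentially routine once the perspective/log-sum-exp structure is identified.
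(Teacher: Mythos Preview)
Your argument is correct: recognizing $\Psi^+_{\gamma,b}$ as the perspective of $h_{\gamma,b}$ and then writing $h_{\gamma,b}$ as log-sum-exp composed with an affine map is a clean and complete route to joint convexity, and your deduction of convexity of the relaxed feasible set is fine. One small caveat: your log-sum-exp rewrite uses $\ln\gamma$ and so tacitly assumes $\gamma>0$; the boundary case $\gamma=0$ (and likewise $b=0$) makes $\Psi^+$ identically zero and is trivial, but it is worth a one-line mention.

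The paper takes a different route: it computes the Hessian of $\Psi^+_{\gamma,b}$ explicitly and checks that it is rank-one positive semidefinite (nonnegative trace, zero determinant). Interestingly, the paper itself remarks immediately afterwards that your perspective/log-sum-exp argument is the more direct way to see convexity, and explains why it nonetheless presents the Hessian computation: the explicit Jacobian and Hessian are needed later for the cutting-plane implementation in~\eqref{eq::knapsack_cutting_planes}. So your approach is the conceptually cleaner one, while the paper's buys the derivative formulas required by the numerical method.
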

\begin{proof}
From elementary calculation, the Jacobian and Hessian of $\Psi^+$ are respectively
$$
\begin{aligned}
J_{ \Psi^+_{\gamma,b}}(y,z) &= \begin{bmatrix}
b-\frac{b(1+\gamma)}{1+\gamma e^{\frac{y}{z} b (1+\gamma)}} &
\ln\left(\frac{\gamma + e^{-\frac{y}{z} b (1+\gamma)}}{1+\gamma}\right) + \frac{b(1+\gamma)y}{z\left(1+\gamma e^{\frac{y}{z} b (1+\gamma)}\right)}
\end{bmatrix}\\
H_{ \Psi^+_{\gamma,b}}(y,z) &= \gamma e^{\frac{y}{z} b (1+\gamma)}\left(\frac{b (1+\gamma)}{1+\gamma e^{\frac{y}{z} b (1+\gamma)}}\right)^2\begin{bmatrix}
\frac{1}{z} & -\frac{y}{z^2}\\
-\frac{y}{z^2} & \frac{y^2}{z^3} 
\end{bmatrix}
\end{aligned}
$$
As $\text{Tr}\left(H_{\Psi_{\gamma,b}}(y,z)\right)\geq 0$ and $\text{det}\left(H_{\Psi_{\gamma,b}}(y,z)\right)= 0$, the Hessian is always a positive semi-definite matrix, and the function is jointly convex. 
\end{proof}
\begin{remark}
We can directly obtain the convexity of the function by noting that $\Psi^+_{\gamma,b}$ is the composition of the perspective function~\cite{Combettes_2017} with a particular log-sum-exp function. This proposition is generalized in~\cite{Nemirovski_2007} to a class of moment-generating function's estimators. Nonetheless, we make explicit the Jacobian and the Hessian of the function, since it will be necessary for numerical optimization.
\end{remark}
By~\Cref{prop::convex_knapsack}, Problem~\eqref{eq::CKP} reduces to a convex Mixed-Integer Non-Linear Programming (MINLP) problem. In fact, there is a unique nonlinear constraint (the budget constraint). For such problems with a moderate degree of nonlinearity, cutting-plane methods~\cite{Westerlund_1995} are known to be efficient. In this approach, the nonlinear constraint is described by a set of linear constraints, incrementally built by adding at each iteration a new cutting-plane of the original constraint. The subproblem at iteration $j$ is then expressed as: 
\begin{equation}\label{eq::knapsack_cutting_planes}
\left(y^{(j+1)},z^{(j+1)}\right)=\argmax_{\substack{y \in\{0,1\}^N\\z\geq 0}} \left\{\pi^T y\,\left\vert\,
\overline{\omega}^Ty
+ \sum_{k=1}^N \left<s^{(i)}_k,\begin{pmatrix}y_k\\z\end{pmatrix}\right>\leq C + z\ln(\tau),\,1\leq i\leq j
\right.\right\}\enspace,
\end{equation}
where $s^{(i)}_k := \nabla \Psi^+_{\gamma_k,b_k}\left(y^{(i)}_k,z^{(i)}\right)$.
The convergence of this approach has been proved, see e.g.~\cite[Theorem 9.6]{Bonnans_2006}, and is achieved in a finite number of steps for this particular problem (there is a finite number of knapsack-filling scenarios). Note that the cuts are dynamically added in the branch-and-bound at each node where an integer solution is found (\emph{Lazy constraint}), so that the solver does not need to perform a complete MILP solving at each iteration.

As an alternative to the model~\eqref{eq::relax_CKP}, we also introduce a convex reformulation of the problem under Bernstein's inequality: let suppose that $|\omega_k - \overline{\omega}_k|\leq b_k$ (and not only $\omega_k - \overline{\omega}_k\leq b_k$), then a valid conservative estimation of $\eqref{eq::CKP}$ can be obtained by replacing the probabilistic constraint by Bernstein's inequality (see e.g.~\cite{Boucheron_2004} for more details on this inequality). We obtain the following formulation: 
\begin{equation}\tag{$\text{CKP-B}$}
\begin{aligned}
\max_{y \in\{0,1\}^N} &\;\pi^T y\\
\text{s.t}\quad&\;   \exp \left(-{\frac {{\tfrac {1}{2}}(C-\overline{\omega}^Ty)^2}{\sum _{i=1}^{N}y_k^2\sigma_k^2+{\tfrac {1}{3}}z (C-\overline{\omega}^Ty)}}\right)\leq \tau\\
&\; z \geq b_k y_k,\;1\leq k\leq N
\end{aligned}
\label{eq::CKP-B}
\end{equation}
\begin{prop} \label{prop::CKP-B}
Problem~\eqref{eq::CKP-B} is equivalent with the following problem:
\begin{equation}
\begin{aligned}
\max_{y \in\{0,1\}^N,z\geq 0} &\;\pi^T y\\
\text{s.t}\quad&\;  {\tfrac {1}{3}}\ln(1/\tau)z + \sqrt{\left(y^T\;z\right)\Gamma\begin{pmatrix}
 y \\z
 \end{pmatrix}} +\overline{\omega}^Ty\leq C\\
 &\; z \geq b_k y_k, \; 1\leq k \leq N
\end{aligned}
\end{equation}
where $\Gamma = \text{diag}\begin{bmatrix}
 (2\ln(1/\tau)\sigma^2_k)_{1\leq k\leq N} \\ {\tfrac {1}{9}}\ln(1/\tau)^2
 \end{bmatrix}$. Therefore, problem~\eqref{eq::CKP-B} -- without the integrity condition -- is convex.
\end{prop}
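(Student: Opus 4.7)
The plan is to simply unfold the Bernstein constraint by algebraic manipulation into an explicit second-order cone form, then read off convexity.

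First I would take the logarithm of the probabilistic constraint in~\eqref{eq::CKP-B}. Noting that at any feasible point one must have $C - \overline{\omega}^T y \geq 0$ (otherwise the probability exceeds $1$), and that the denominator $\sum_k y_k^2 \sigma_k^2 + \tfrac{1}{3}z(C-\overline{\omega}^T y)$ is nonnegative, multiplying through yields the equivalent quadratic inequality in $u := C - \overline{\omega}^T y$:
\begin{equation*}
\tfrac{1}{2}u^2 \;-\; \tfrac{1}{3}\ln(1/\tau)\,z\,u \;-\; \ln(1/\tau)\sum_{k=1}^N y_k^2 \sigma_k^2 \;\geq\; 0.
\end{equation*}

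Next I would solve this quadratic in $u$. Since $u \geq 0$ and the coefficient on $u^2$ is positive, the inequality is equivalent to $u$ being at least the larger root, which after applying the quadratic formula gives
\begin{equation*}
C - \overline{\omega}^T y \;\geq\; \tfrac{1}{3}\ln(1/\tau)\,z \;+\; \sqrt{\tfrac{1}{9}\ln(1/\tau)^2 z^2 + 2\ln(1/\tau)\sum_{k=1}^N \sigma_k^2 y_k^2}.
\end{equation*}
The quantity under the square root is precisely the quadratic form $(y^T\;z)\Gamma(y;z)^T$ with $\Gamma$ as defined in the statement, giving exactly the claimed reformulation. The auxiliary constraints $z \geq b_k y_k$ are inherited verbatim from~\eqref{eq::CKP-B}.

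Finally, for convexity of the continuous relaxation, I would observe that $\Gamma \succeq 0$, so $(y,z)\mapsto \sqrt{(y^T\;z)\Gamma(y;z)^T}$ is a (semi-)norm of a linear function of $(y,z)$, hence convex. Adding the linear terms $\tfrac{1}{3}\ln(1/\tau)z + \overline{\omega}^T y$ preserves convexity, so the whole constraint defines a convex feasible region, as do the linear constraints $z \geq b_k y_k$. The main (mild) subtlety is the sign handling when clearing the denominator and selecting the correct root of the quadratic; once the feasibility regime $C \geq \overline{\omega}^T y$ is pinned down these steps are routine.
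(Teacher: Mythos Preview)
Your proposal is correct and follows essentially the same route as the paper: take logarithms, clear the denominator to obtain a quadratic inequality in $u=C-\overline{\omega}^Ty$, and extract the SOCP constraint. The only cosmetic difference is that the paper completes the square while you invoke the quadratic formula directly; these are the same computation. Your explicit justification of convexity via $\Gamma\succeq 0$ is a nice addition that the paper's proof leaves implicit.
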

\begin{proof}
We reformulate the constraint so that we end up with a convex reformulation:
$$
\begin{aligned}
 &\exp \left(-{\frac {{\tfrac {1}{2}}t^2}{\sum _{i=1}^{N}y_k^2\sigma_k^2+{\tfrac {1}{3}}z t}}\right)\leq \tau,\quad t=C-\overline{\omega}^Ty, \;z = \max_{k}\{b_k y_k\}\\
 \iff & \ln(1/\tau)\left[\sum _{i=1}^{N}y_k^2\sigma_k^2+{\tfrac {1}{3}}z t\right] \leq \frac{1}{2}t^2\\
 \iff & \ln(1/\tau)\sum _{i=1}^{N}y_k^2\sigma_k^2 \leq \frac{1}{2}\left[t - {\tfrac {1}{3}}\ln(1/\tau)z\right]^2 - {\tfrac {1}{18}}\ln(1/\tau)^2z^2\\
 \iff & \sqrt{2\ln(1/\tau)\sum _{i=1}^{n}y_k^2\sigma_k^2 + {\tfrac {1}{9}}\ln(1/\tau)^2z^2} \leq C-\overline{\omega}^Ty - {\tfrac {1}{3}}\ln(1/\tau)z\\
 \iff & \sqrt{\begin{pmatrix} y^T & z\end{pmatrix} \Gamma\begin{pmatrix}
 y \\z
 \end{pmatrix}} \leq C-\overline{\omega}^Ty - {\tfrac {1}{3}}\ln(1/\tau)z
\end{aligned}
$$
Note that, in the optimization problem, it is sufficient to consider $z\geq \max_k\{b_k y_k\}$ as the optimization will search for the lowest value possible ($z$ only appears on the constraint above), ans so the constraint will be naturally saturated.
\end{proof}


The SOCP formulation introduced in~\Cref{prop::CKP-B} provides an alternative conservative approximation, which can be directly compared to the classical Chebyshev approximation, as they both belongs to the same class of problem. In contrast, the formulation~\eqref{eq::relax_CKP} is not expressed as a cone programming, but we provide in~\Cref{app::conic_reformulation} a reformulation where constraints are expressed via exponential cones.

\begin{remark}
We already know (proved theoretically above and highlighted by~\Cref{fig::res}) that \eqref{eq::relax_CKP} gives better solution than all other formulations (apart from exact one in the case of normally distributed weights), as the set of admissible solutions is larger.
Note also that we did not provide optimization model for the bound developed in~\cite{Jebara_2018} and~\cite{Bennett_1962}, as a convex expression of the chance constraint is all but immediate to obtain (if it exists).
\end{remark}

\paragraph{Numerical results.}
In order to obtain chance-constrained instances, we adapted deterministic instances from the literature\footnote{The instances are extracted from the website
\url{http://artemisa.unicauca.edu.co/~johnyortega/instances_01_KP/}
We use the set of instances \texttt{knapPI\_\{X\}\_1000\_1} where \texttt{X} goes from \texttt{1\_100} to \texttt{2\_10000} (the second number stands for the number of items in the instances), see~\Cref{table::knapsack_results}.}, see~\cite{Pisinger_2005}, by adding a maximum standard deviation of 5\% of the original weight (taken as mean value), and setting the maximum value to $b = 5\sigma$. Note that for normal distribution, the probability of exceeding $\overline{\omega} + 3\sigma$ is $0.997$. Finally, the maximum probability error $\tau$ is taken to 3\%.

We use \texttt{Cplex v12.10} as a MILP solver and the tests are performed on a laptop \texttt{Intel Core i7 @2.20GHz\,$\times$\,12}. The MIP gap tolerance is taken to 0.001\% and the Integrity tolerance to 1e-8. The tests shows that the cutting-plane method adds very few cuts. For instance, the solver added 190 cuts for the instance \texttt{1\_10000}.

\begin{table}[!ht]
\centering
\small
\begin{tabular}{|l||r|r||r|r|r||r||r|r|}
\hline
Instance & KP & \eqref{eq::CKP-N} & \eqref{eq::relax_CKP} & Prob. & Time & \eqref{eq::CKP-B} & \eqref{eq::CKP-C} & \eqref{eq::CKP-H}\\
\hline\hline
\texttt{1\_100}     & 9147     & 8842   & 8817    & 0.19 & 0.1  & 8719    & 8817    & 8150\\\hline
\texttt{1\_200}     & 11238    & 11227  & 10962   & 0.81 & 0.1  & 10682   & 10832   & 10353\\\hline
\texttt{1\_500}     & 28857    & 28606  & 28405   & 2.11 & 0.4  & 28152   & 28127   & 27924\\\hline
\texttt{1\_1000}    & 54503    & 54105  & 53836   & 1.58 & 0.65 & 53617   & 53267   & 52109\\\hline
\texttt{1\_2000}    & 110625   & 110130 & 109779  & 2.95 & 1.8  & 109621  & 109148  & 107228\\\hline
\texttt{1\_5000}    & 276457   & 275685 & 275220  & 2.99 & 33.4 & 275068  & 274151  & \textit{271160}\\\hline
\texttt{1\_10000}   & 563647   & 562560 & 561968  & 3.00 & 97.4 & 561809  & 560387  & \textit{556126}\\\hline\hline
\texttt{2\_100}     & 1514     & 1513   & 1512    & 0.82 & 0.1  & 1456    & 1476    & 1395\\\hline
\texttt{2\_200}     & 1634     & 1619   & 1594    & 0.69 & 0.2  & 1558    & 1592    & 1508\\\hline
\texttt{2\_500}     & 4566     & 4537   & 4504    & 2.31 & 0.5  & 4472    & 4472    & 4348\\\hline
\texttt{2\_1000}    & 9052     & 9008   & 8970    & 2.87 & 1.52 & 8951    & 8927    & 8761\\\hline
\texttt{2\_2000}    & 18051    & 17991  & 17946   & 2.85 & 4.0  & 17925   & 17872   & \textit{17635}\\\hline
\texttt{2\_5000}    & 44356    & 44262  & 44201   & 2.86 & 32.7 & 44184   & 44073   & \textit{43696}\\\hline
\texttt{2\_10000}   & 90204    & 90071  & 89996   & 2.99 & 84.2 & 89975   & 89807   & \textit{89265}\\\hline
\end{tabular}
\caption{Results for knapsack instances. 
We compare the two new formulations \eqref{eq::relax_CKP} and \eqref{eq::CKP-B} to the existing methods \eqref{eq::CKP-H} and \eqref{eq::CKP-C}. The method KP corresponds to the deterministic case, and \eqref{eq::CKP-N} corresponds to a normally-distributed uncertainty. For \eqref{eq::relax_CKP}, we also provide the probability error and the computational time. When the objective is in italic, the solver does not succeed to prove the optimality in the given time. }
\label{table::knapsack_results}
\end{table}

The numerical tests show the efficiency of the proposed relaxation: the use of the second-order information leads to a substantial improvement of the optimal objective-function value, compared to the classical Hoeffding bound. Besides, this method appears to be easy tractable, as we were able to solve instances of $10000$ items in less than two minutes. Note that the relaxation seems to be a bit more tractable than the Hoeffding bound, as the solver cannot prove the optimality of the solution with the desired precision in less than 10 minutes.

 \begin{remark}We present here the results for mixed-integer problems, but the results and the methodology does not exploit the integrity condition of the variables, and so the results and the methodology are still applicable on the (simpler) continuous problem. In particular, a cutting-plane approach still converges.
 \end{remark}

\subsection{Distributionally Robust Support Vector Machine problem}\label{sec::SVM}
Let us consider a dataset of $M$ points $\{x_i, l_i\}$ where each point $x_i$ is a vector of $\bbR^N$. The points lies into two classes, indexed by labels $l_i\in \{-1,1\}$. 
The chance-constrained formulation of the Support Vector Machine (SVM) problem (with soft margin) is defined as follows:
\begin{equation}\label{eq::SVM}\tag{SVM -- CCP}
\begin{aligned}
\min_{w\in\bbR^N,w_0\in\bbR,\xi\in\bbR_+^M} &\quad \frac{1}{2}\|w\|_2^2 + C\sum_{i=1}^M \xi_i\\
\text{s.t.} &\quad \Prob\left[l_i(w^T x_i + w_0) \leq 1 - \xi_i\right]\leq \tau_i\\
&\quad \xi_i \geq 0,\; i=1,\hdots,M
\end{aligned}
\end{equation}
This robust version has been recently studied, see e.g.~\cite{Wang_2015,Khanjani_2022}. Here, we focus on independent noises for each points as in~\cite{Ben_Tal_2010}.
In contrast with the knapsack problem~\eqref{eq::CKP}, the random training points are multiplied by $w$, which can be either positive or negative. Therefore, we can no longer apply~\eqref{eq::pos_sum} and must use~\eqref{eq::qq_sum} which contains absolute values. Moreover, each training feature (point) defines a (nonlinear) chance constraint.
\begin{prop}\label{prop::SVM_convex}
Let~\eqref{eq::relax_SVM} be defined as
\begin{equation}\label{eq::relax_SVM}\tag{$\overline{\text{SVM}}$}
\begin{aligned}
\min_{w\in\bbR^N,w_0\in\bbR,\xi\in\bbR_+^M} &\quad \frac{1}{2}\|w\|_2^2 + C\sum_{i=1}^M \xi_i\\
\text{s.t}\quad&\;  -l_i(w_0 + w^T \overline{x}_i)
+ \sum_{k=1}^N \Psi_{\gamma_{ik},b_{ik}}(w_k,z_i)\leq \xi_i - 1 + z_i\ln(\tau_i),\,1\leq i\leq M
\end{aligned}
\end{equation}
where $\Psi_{\gamma,b}:\bbR\times \bbR_+ \to \bbR_+$ is defined as $\Psi_{\gamma,b}(y,z)=\Psi^+_{\gamma,b}(|y|,z)$.
This problem is a valid conservative approximation of~\eqref{eq::SVM}.
\end{prop}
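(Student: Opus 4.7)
The plan is to show that any $(w,w_0,\xi,z)$ satisfying the constraints of~\eqref{eq::relax_SVM} (interpreting $z$ as an auxiliary non-negative vector of $M$ components) produces a $(w,w_0,\xi)$ feasible for~\eqref{eq::SVM}, by a direct application of the tight Chernoff bound~\eqref{eq::qq_sum} to each training point, followed by the same perspective reformulation already exploited for the knapsack. First, I would recast the $i$-th chance event in the canonical form required by~\Cref{prop::mgf}. Setting $\lambda_k := -l_i w_k$, the event $l_i(w^T x_i + w_0)\leq 1-\xi_i$ is equivalent to $\langle \lambda, x_i - \bar{x}_i\rangle \geq d_i$ with $d_i := \xi_i - 1 + l_i(w_0 + w^T \bar{x}_i)$, and since $l_i\in\{-1,1\}$ one has $|\lambda_k|=|w_k|$. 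Applying~\eqref{eq::qq_sum} then yields, for every $t>0$,
\[
\ln\Prob\!\left[l_i(w^T x_i + w_0)\leq 1-\xi_i\right] \leq -t\,d_i + \sum_{k=1}^N \ln\!\left(\frac{\gamma_{ik}e^{t|w_k|b_{ik}}+e^{-t|w_k|b_{ik}\gamma_{ik}}}{1+\gamma_{ik}}\right),
\]
so a sufficient condition for the $i$-th chance constraint of~\eqref{eq::SVM} is that this right-hand side be $\leq \ln(\tau_i)$ for some $t>0$.

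Next, to remove the bilinear coupling between $t$ and $w$, I would substitute $z_i := 1/t>0$ and multiply the previous sufficient condition by $z_i$, obtaining
\[
-d_i + \sum_{k=1}^N z_i\ln\!\left(\frac{\gamma_{ik}e^{|w_k|b_{ik}/z_i}+e^{-|w_k|b_{ik}\gamma_{ik}/z_i}}{1+\gamma_{ik}}\right) \leq z_i \ln(\tau_i).
\]
By the very definition of $\Psi_{\gamma,b}(y,z)=\Psi^+_{\gamma,b}(|y|,z)$, the $k$-th summand is exactly $\Psi_{\gamma_{ik},b_{ik}}(w_k,z_i)$; substituting the expression of $d_i$ produces precisely the $i$-th constraint of~\eqref{eq::relax_SVM}. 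Therefore every $(w,w_0,\xi,z)$ feasible for~\eqref{eq::relax_SVM} with $z_i>0$ certifies the Chernoff upper bound $\tau_i$ on $\Prob[l_i(w^T x_i + w_0)\leq 1-\xi_i]$ for every $i$, hence $(w,w_0,\xi)$ is feasible for~\eqref{eq::SVM}; since the two problems share the same objective, this proves the conservative-approximation claim.

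The only mild point to clarify is the boundary value $z_i=0$, which lies in $\bbR_+$ but is excluded by the change of variable $z_i=1/t$. This is harmless: one can either restrict to $z_i>0$ without loss of generality (the sufficient condition above is trivially monotonic and loses nothing in the $z_i\to 0^+$ limit), or equip $\Psi_{\gamma,b}$ with its lower-semi-continuous perspective extension at $z_i=0$, which assigns $+\infty$ whenever $w_k\neq 0$ and $0$ otherwise. Under either convention the derivation above is an equivalence on $z_i>0$. The only modelling detail worth flagging is the role of the absolute value $|w_k|$, which arises because the weights $-l_i w_k$ in~\eqref{eq::qq_sum} can carry either sign: this is precisely what $\Psi_{\gamma,b}(\cdot,\cdot):=\Psi^+_{\gamma,b}(|\cdot|,\cdot)$ is designed to encode, so no further obstacle arises beyond reading~\eqref{eq::qq_sum} with $\lambda_k = -l_i w_k$.
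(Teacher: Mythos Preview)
Your proof is correct and follows essentially the same route as the paper: for each training index $i$ you apply~\eqref{eq::qq_sum} with $\lambda=-l_iw$ and $d=\xi_i-1+l_i(w_0+w^T\bar{x}_i)$, then perform the perspective substitution $z_i=1/t$ to recover the $i$-th constraint of~\eqref{eq::relax_SVM}. The paper's own proof is just a two-line sketch of exactly this; your version simply spells out the identification $|\lambda_k|=|w_k|$ and adds a (sound) remark on the harmless boundary case $z_i=0$, which the paper leaves implicit.
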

\begin{proof}
We follow the similar steps as for the knapsack case. In particular, we use~\Cref{prop::mgf} for each chance constraint $i\in\{1,\hdots,M\}$ with $X = x$, $\lambda = -l_i w$ and $d = \xi_i -1 + l-i(w_0+\overline{x})$. Then, we apply the chance of variable $z=1/t$.
\end{proof}
The following proposition shows that the function $\Psi$ keeps the same regularity as $\Psi^+$:
\begin{prop}\label{prop::smooth_C2}
The function $\Psi_{\gamma,b}$ is convex and twice continuously differentiable. Moreover,
\begin{equation}
\Psi_{\gamma,b}(y,z)=\begin{cases}
\Psi^+_{\gamma,b}(y,z),\quad y\geq 0\\
\Psi^+_{\frac{1}{\gamma},b\gamma}(y,z),\quad y\leq 0
\end{cases}
\end{equation}
As a consequence, problem \eqref{eq::relax_SVM} is a \emph{convex} conservative approximation of~\eqref{eq::SVM}.
\end{prop}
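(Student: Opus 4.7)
The plan is to establish the piecewise formula first, use it (and an explicit reduction to a perspective) to get joint convexity, and finally verify global $C^2$ regularity by checking what happens at the seam $\{y=0\}$.

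\textbf{Step 1 (piecewise identity).} For $y \geq 0$ the claim $\Psi_{\gamma,b}(y,z)=\Psi^+_{\gamma,b}(y,z)$ is immediate from $|y|=y$. For $y \leq 0$, it suffices to show the symmetry identity $\Psi^+_{\gamma,b}(-y,z) = \Psi^+_{1/\gamma,\,b\gamma}(y,z)$. This is a one-line algebraic check: starting from the definition of $\Psi^+_{1/\gamma,b\gamma}(y,z) = z\ln\bigl((\gamma^{-1}e^{yb\gamma/z}+e^{-yb/z})/(1+\gamma^{-1})\bigr)$, multiply numerator and denominator inside the logarithm by $\gamma$ to obtain $z\ln\bigl((e^{yb\gamma/z}+\gamma e^{-yb/z})/(1+\gamma)\bigr)$, which is exactly $\Psi^+_{\gamma,b}(-y,z) = \Psi^+_{\gamma,b}(|y|,z)$. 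This gives the stated case distinction, and in particular on each half-space $\Psi_{\gamma,b}$ coincides with some $\Psi^+$, hence is jointly convex there by \Cref{prop::convex_knapsack}.

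\textbf{Step 2 (convexity via perspective).} The most transparent route to \emph{global} convexity is to factor $\Psi_{\gamma,b}(y,z) = z\, g(y/z)$ on $\mathbb{R}\times\mathbb{R}_{++}$, where
\[
g(u) := \ln\!\left(\frac{\gamma e^{|u| b}+e^{-|u| b\gamma}}{1+\gamma}\right)
\]
is the even function obtained from the univariate log-sum-exp $f(u)=\ln((\gamma e^{ub}+e^{-ub\gamma})/(1+\gamma))$ by $g(u)=f(|u|)$. The function $f$ is convex on $\mathbb{R}$ (log-sum-exp composed with an affine map), and a direct differentiation gives $f'(0)=(b\gamma-b\gamma)/(1+\gamma)=0$, so the right- and left-derivatives of $g$ at $0$ agree at value $0$; since $g''(u)=f''(\pm u) \geq 0$ on each side, $g$ is a convex $C^2$ function on $\mathbb{R}$. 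Therefore $\Psi_{\gamma,b}$ is the perspective of a convex function, hence jointly convex on its domain. The $C^2$ regularity of $\Psi_{\gamma,b}$ then follows from that of $g$ and the smoothness of the perspective construction away from $z=0$; alternatively, one can verify directly that the Jacobian formula of \Cref{prop::convex_knapsack}, evaluated at $y=0$, gives a first row $b-b(1+\gamma)/(1+\gamma)=0$ which matches on both branches, and likewise for the entries of the Hessian.

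\textbf{Step 3 (convexity of the program).} Problem \eqref{eq::relax_SVM} has a convex quadratic-plus-linear objective in $(w,w_0,\xi)$, and each constraint has left-hand side
\[
-l_i(w_0+w^\top\bar x_i)+\sum_{k=1}^N \Psi_{\gamma_{ik},b_{ik}}(w_k,z_i)
\]
which is affine in $(w_0,w)$ plus a sum of the jointly convex functions $\Psi_{\gamma_{ik},b_{ik}}(w_k,z_i)$, minus the affine right-hand side $\xi_i-1+z_i\ln(\tau_i)$; each constraint therefore defines a convex set. Combined with $\xi\in\mathbb{R}_+^M$, \eqref{eq::relax_SVM} is a convex program, and its conservativeness is inherited from \Cref{prop::SVM_convex}.

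\textbf{Main obstacle.} The delicate step is the $C^2$-gluing at $y=0$: on each half-space $\Psi_{\gamma,b}$ is smooth and convex, but one must check that the value, gradient and full Hessian match across the seam so that global $C^2$ regularity (and hence global convexity via the perspective argument) holds. The even-function device $\Psi_{\gamma,b}(y,z)=z\,g(|y/z|\,\text{sign-free})=z\,f(|y/z|)$ bypasses this by reducing the problem to the smoothness of $f(|\cdot|)$ at $0$, which is controlled by the single identity $f'(0)=0$.
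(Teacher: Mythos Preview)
Your proof is correct, and the piecewise identity together with the seam check at $y=0$ are exactly what the paper does: the paper verifies $\nabla\Psi^+_{\gamma,b}(0,z)=0$ and then checks that the Hessian at $y=0$ satisfies $H_{\Psi^+_{\gamma,b}}(0,z)=H_{\Psi^+_{1/\gamma,\,b\gamma}}(0,z)$ (both equal to $\gamma b^2/z$ in the $(1,1)$ entry, zero elsewhere), which is precisely the ``alternative'' you sketch at the end of Step~2. Where you depart from the paper is in the route to global convexity: the paper relies on the piecewise convexity of $\Psi^+$ from \Cref{prop::convex_knapsack} combined with the $C^2$ gluing (so the Hessian is PSD everywhere), whereas you recast $\Psi_{\gamma,b}(y,z)=z\,g(y/z)$ as the perspective of the even function $g(u)=f(|u|)$ and observe that $f'(0)=0$ makes $g$ a $C^2$ convex function, so the perspective construction yields joint convexity directly. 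Your perspective argument is a bit more conceptual and delivers global convexity in one stroke, without having to argue separately that a $C^2$ gluing of two convex pieces stays convex; the paper's computation, on the other hand, makes the full Hessian explicit, which is useful downstream for numerical optimization.
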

\begin{proof}
We have the following direct equalities:
$$
\Psi^+_{\gamma,b}(-y,z) 
 = z\ln\left(\frac{e^{-\frac{y}{z} b}+ \gamma^{\shortminus 1}e^{\frac{y}{z} b\gamma}}{1+\gamma^{\shortminus 1}}\right)
 = z\ln\left(\frac{e^{-\frac{y}{z} (b\gamma)\gamma^{\shortminus 1}}+ \gamma^{\shortminus 1}e^{\frac{y}{z} (b\gamma)}}{1+\gamma^{\shortminus 1}}\right) = \Psi^+_{\gamma^{\shortminus 1},b\gamma}(y,z)
 \enspace.
$$
Furthermore, to check the regularity property, it suffices to verify the condition in $y=0$. $\nabla \Psi^+_{\gamma,b}(0,z) = 0$ for all $\gamma$ and $b$, so $\nabla \Psi_{\gamma,b}(0^-,z) = \nabla \Psi_{\gamma,b}(0^+,z) = 0$. Moreover,
$
H_{\Psi^+_{\gamma,b}}(0,z) = \begin{bmatrix}b^2\gamma & 0\\0 & 0\end{bmatrix} = H_{\Psi^+_{\gamma^{-1},b\gamma}}(0,z) \enspace.
$
Therefore, $\Psi_{\gamma,b}$ is twice continuously differentiable in $y=0$.
\end{proof}

\paragraph{Numerical results.}
In the tests, \eqref{eq::relax_SVM} is implemented using the interior-point nonlinear solver \texttt{IPOPT}\footnote{\url{https://coin-or.github.io/Ipopt/}}. The solver always returns the optimal solution as the problem has been proved to be convex, see~\Cref{prop::smooth_C2}. For comparison, we also implement the robust SVM approximation using Chebyshev-Cantelli inequality -- see e.g.~\cite{Wang_2015} -- which can be efficiently solved by any SOCP solver.

\begin{figure}[!ht]
\centering
\begin{subfigure}{0.49\linewidth}
\centering
	\includegraphics[width=\linewidth,clip=true, trim= 3.2cm 9cm 3cm 9cm]{./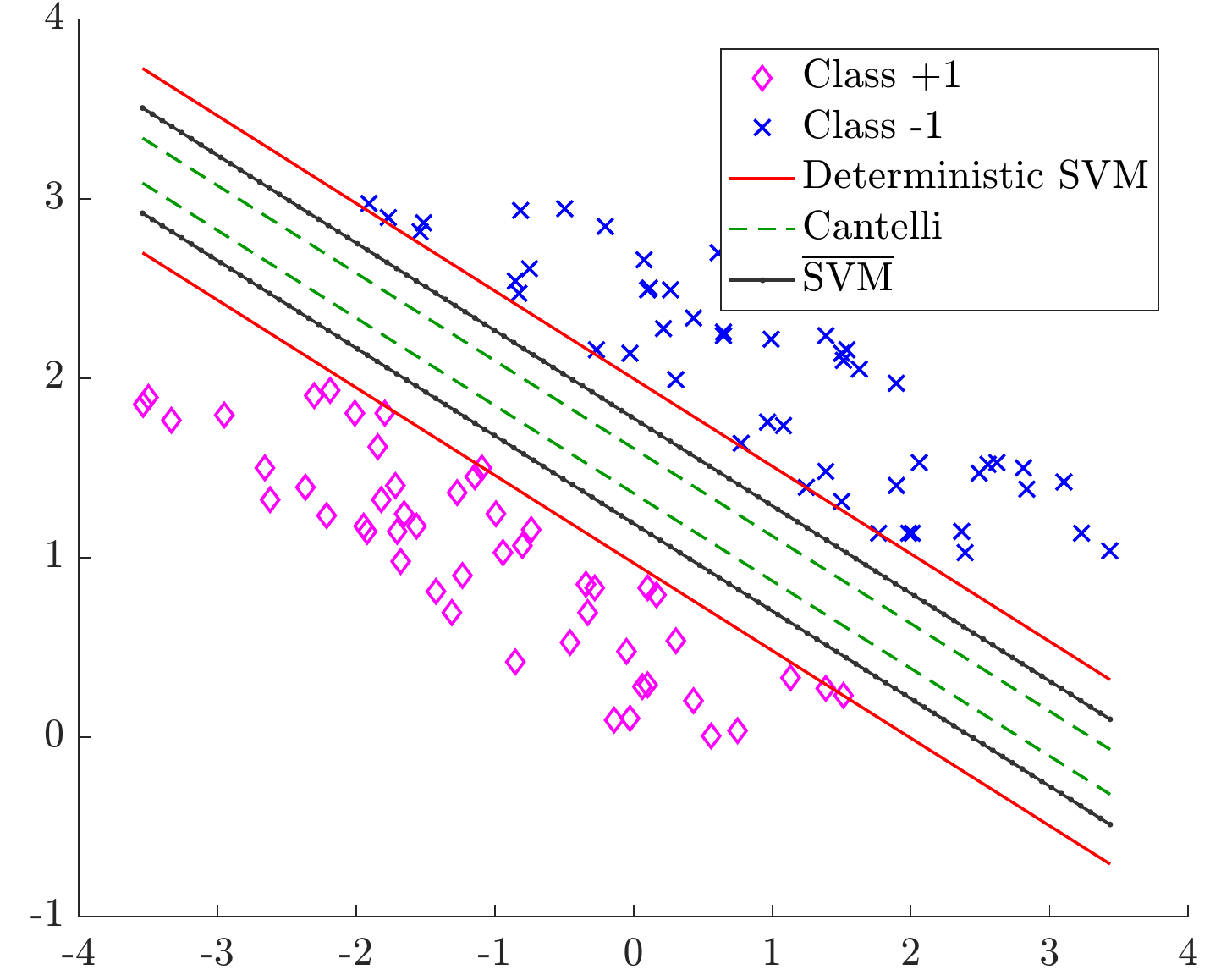}
	\caption{Robust separation \\(no slack variables used)}
\end{subfigure}
\begin{subfigure}{0.49\linewidth}
\centering
	\includegraphics[width=\linewidth,clip=true, trim= 3.2cm 9cm 3cm 9cm]{./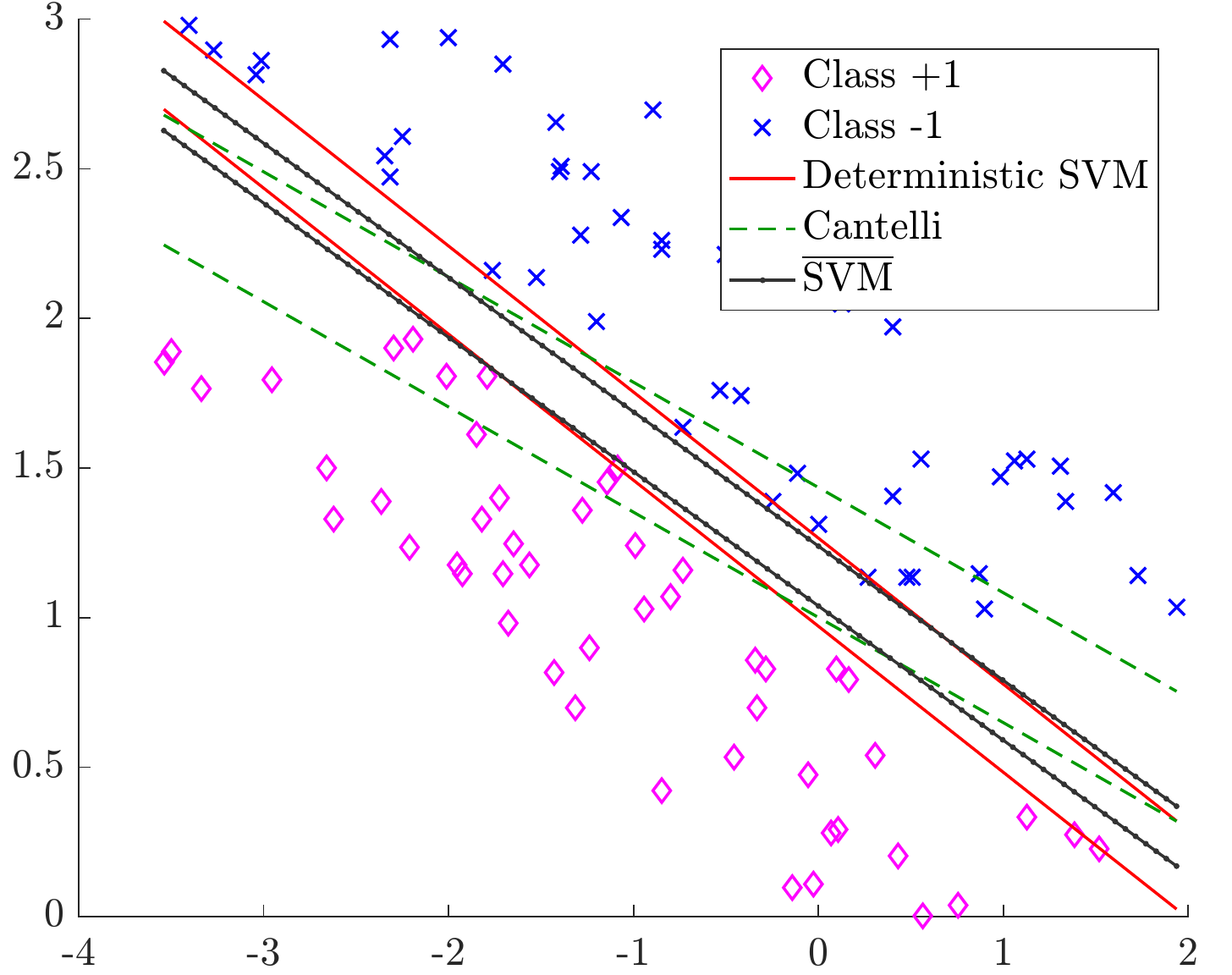}
	\caption{Robust separation\\(active slack variables for some points)}
	\label{fig::svm_tight}
\end{subfigure}
\caption{Two-dimensions SVM with $\tau=0.02$, $M = 100$, $C=100$.\\
We directly represent the margin around the hyperplane (centered between the two lines)}
\label{fig::svm}
\end{figure}

First, we construct 2D instances with linearly separable classes. To compute the standard deviation of each point, we follow the method of~\cite{Wang_2015} by calculating the standard deviation of the training points for each class and then divide by 10. This appears to be reasonable as an uncertainty set for each data point. The results are displayed on~\Cref{fig::svm}.  On the left, the classes are sufficiently distant so that it is not necessary to activate slack variables $\xi_i$. We observe that all the methods find the same hyperplane, but differ on the size of the margin width. As expected, the Chebyshev-Cantelli' inequality is more conservative on this example. On~\Cref{fig::svm_tight}, we reduce the space between the two classes. The points are still linearly separable in the deterministic setting, but are not robustly separable both for Chebyshev and for the proposed method. Nonetheless, we numerically observe that Cantelli relaxation needs to activate more slack variables, and so the optimal value is greater than the one found by the proposed method.

\begin{figure}[!ht]
\centering
\begin{subfigure}{0.45\linewidth}
\centering
	\includegraphics[width=\linewidth,clip=true, trim= 3.5cm 9.5cm 3.8cm 9.5cm]{./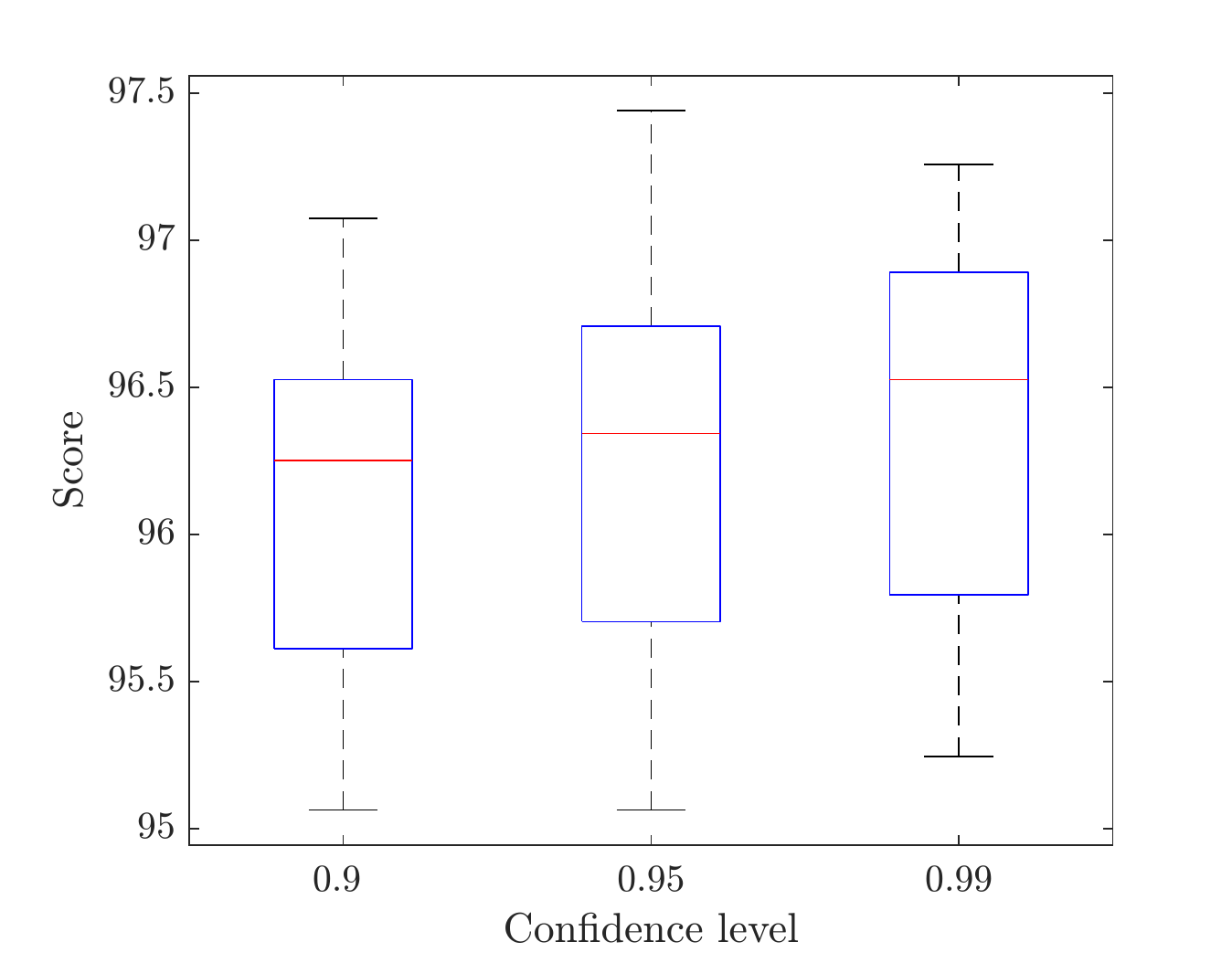}
	\caption{Score with 20\% of training points}
  \label{fig::wis_1}
\end{subfigure}
\begin{subfigure}{0.45\linewidth}
\centering
	\includegraphics[width=\linewidth,clip=true, trim= 3.5cm 9.5cm 3.8cm 9.5cm]{./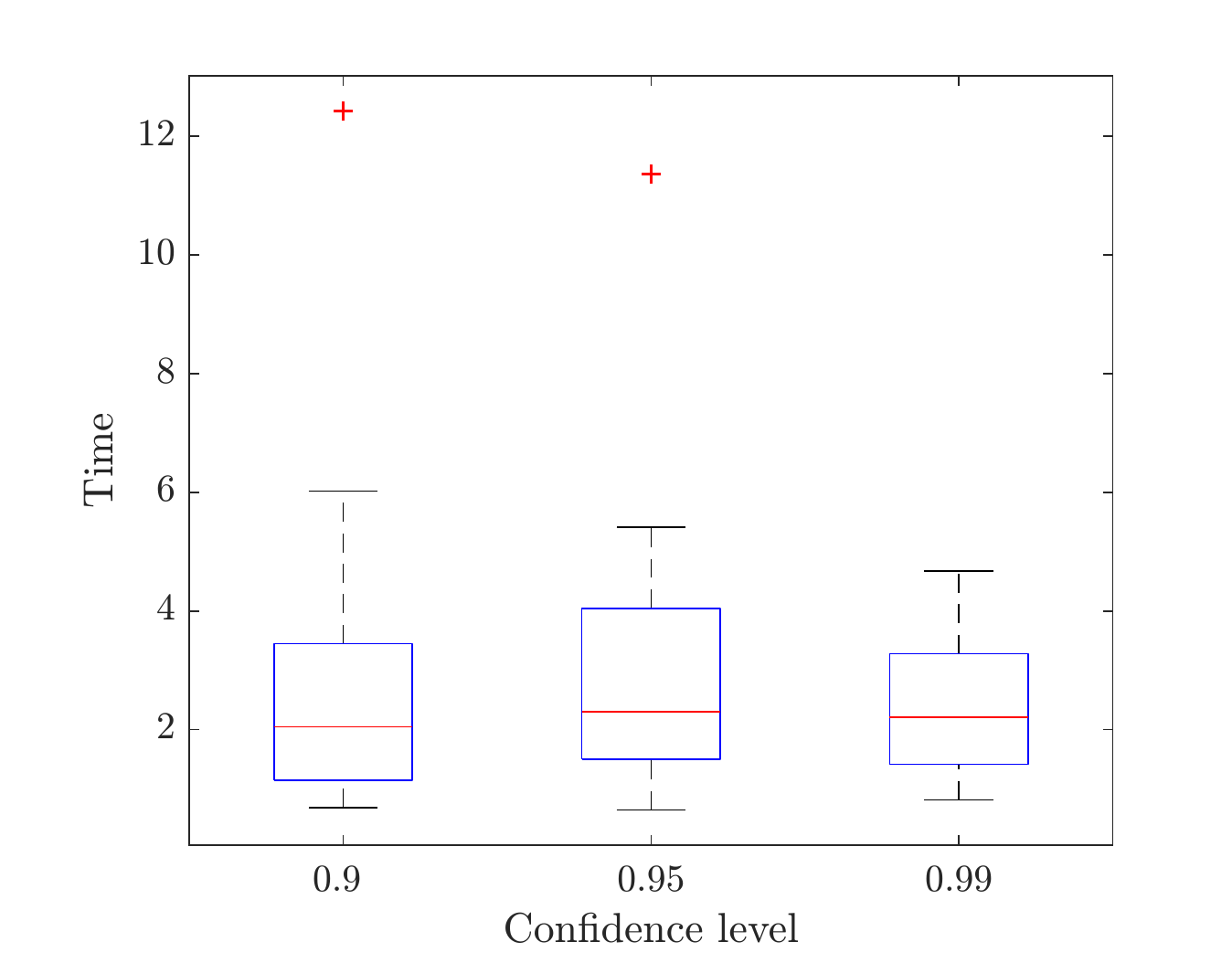}
	\caption{Time (s) with 20\% of training points}
	\label{fig::wis_2}
\end{subfigure}
\begin{subfigure}{0.45\linewidth}
\centering
	\includegraphics[width=\linewidth,clip=true, trim= 3.5cm 9.5cm 3.8cm 9.5cm]{./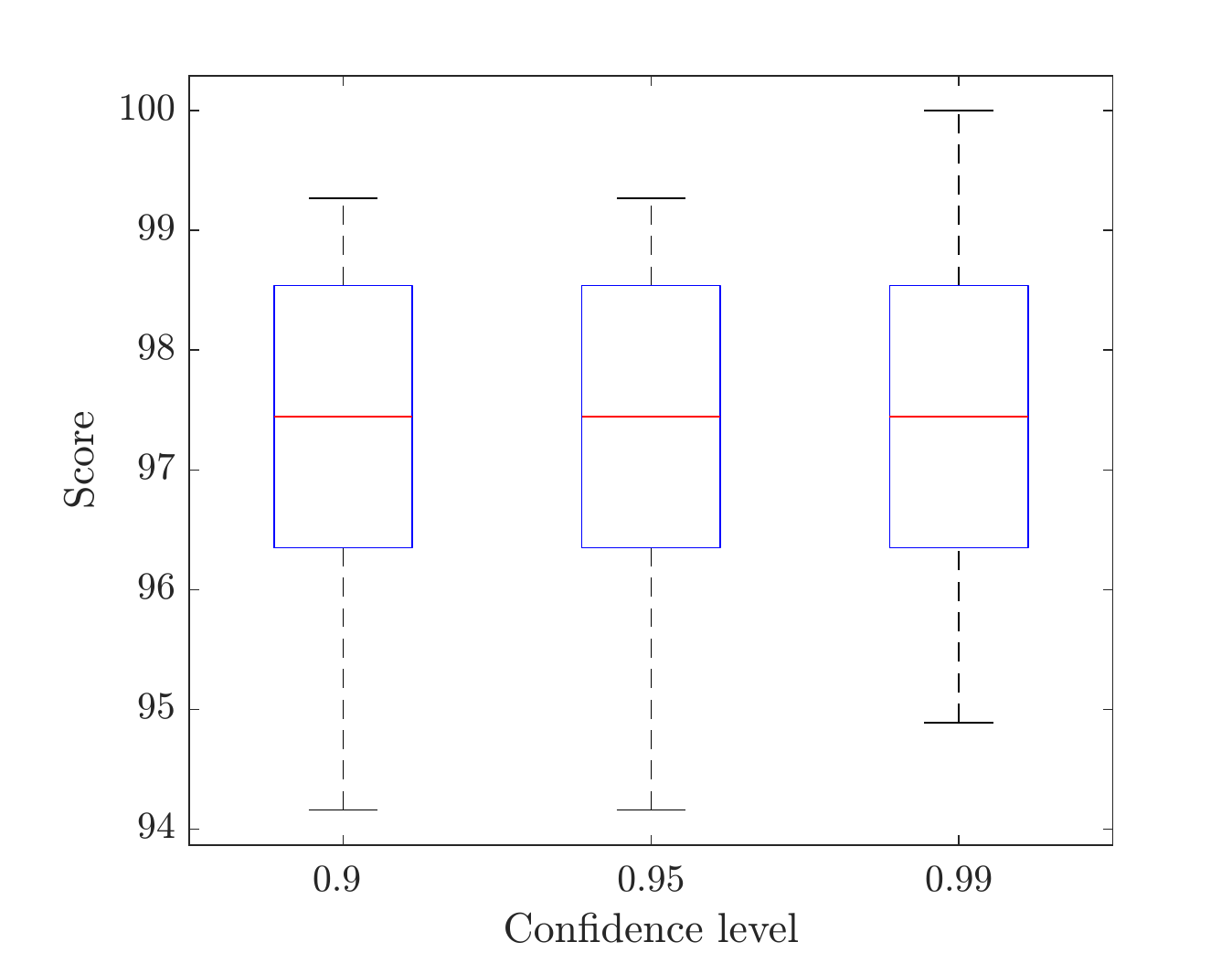}
	\caption{Score with 80\% of training points}
  \label{fig::wis_3}
\end{subfigure}
\begin{subfigure}{0.45\linewidth}
\centering
	\includegraphics[width=\linewidth,clip=true, trim= 3.5cm 9.5cm 3.8cm 9.5cm]{./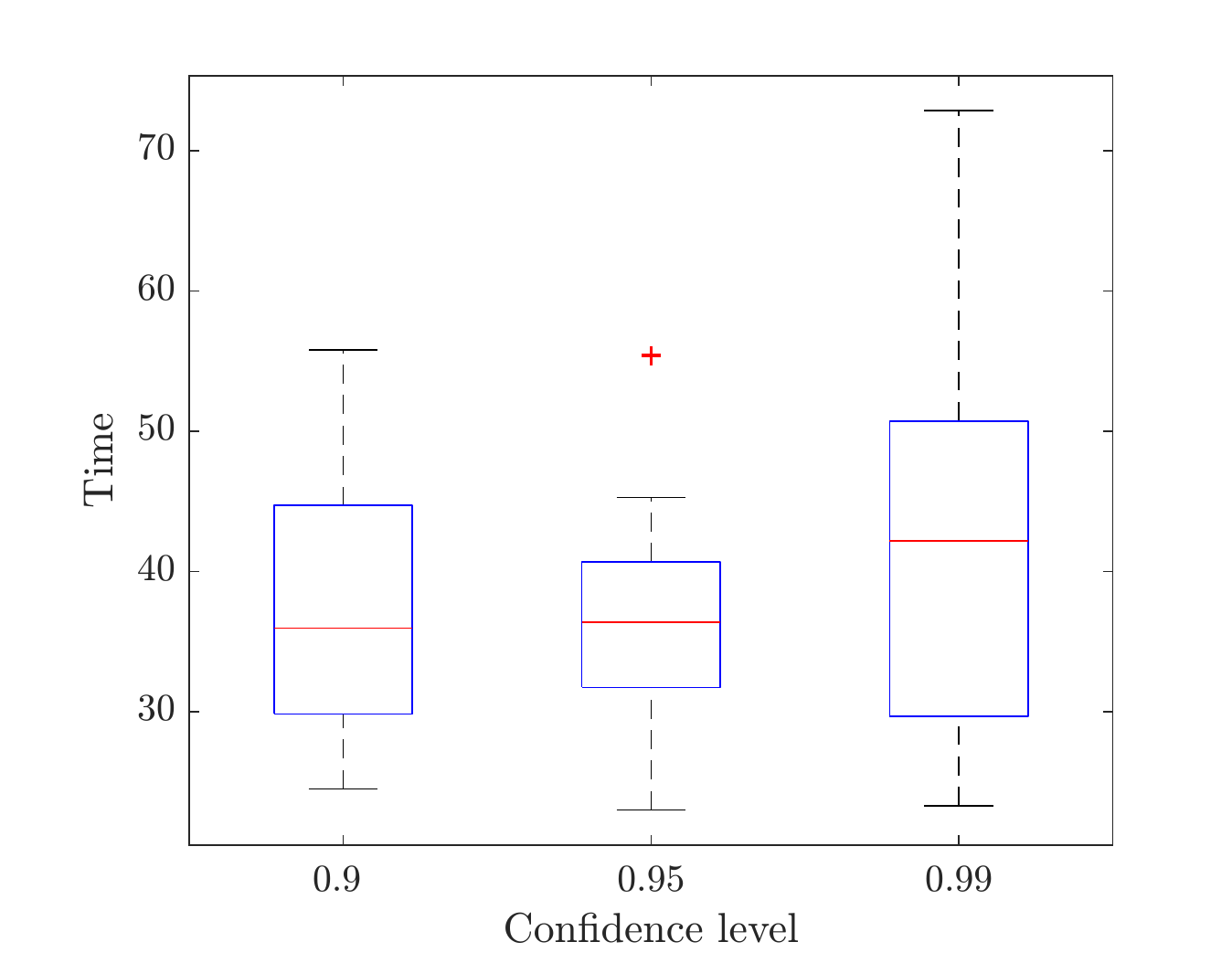}
	\caption{Time (s) with 80\% of training points}
	\label{fig::wis_4}
\end{subfigure}
\caption{Two-dimensions SVM with $\tau=0.02$, $M = 100$, $C=100$}
\label{fig::wis}
\end{figure}

We then use the proposed method on instances from the literature. In particular, we use data on Breast Cancer in the Wisconsin\footnote{\href{https://archive.ics.uci.edu/ml/datasets/Breast+Cancer+Wisconsin+\%28Diagnostic\%29}
{https://archive.ics.uci.edu/ml/datasets/Breast+Cancer+Wisconsin+\%28Diagnostic\%29}}. This data set contains 683 samples of dimension 10, see e.g.~\cite{Wang_2015,Khanjani_2022} for more information on the dataset. \Cref{fig::wis} displays the time and the score (the percentage of test data that satisfies the classification obtained with the training set) for two configurations. We observe that the mean score is always higher than 96\% (same order as in~\cite{Wang_2015,Khanjani_2022}), and the time stays reasonable even for a substantial number of features (more than 500 training points, see~\Cref{fig::wis_2,fig::wis_4}). 

\section{Conclusion and perspectives}
In this paper, we studied a refined Bennett-type inequality, originally developed in the homogeneous setting and extended here to the heterogeneous case. We have shown that this concentration inequality can be used in a wide range of applications. First, we introduce a double bisection search which computes (in logarithmic time) confidence bounds proved to be tighter than other classical approaches. In particular, it outperforms the standard Chebyshev's approach for high probability precision.
Besides, we obtained tight distributionally robust bounds to individual Chance-constrained Programming which can be formulated as convex problem. In particular, we highlighted that the inequality can be inserted into CCP binary knapsack problem while staying tractable (instances of 10\;000 binary variables). Tests on SVM problems have also been performed, obtaining a better separability of the data on instances from the literature (containing up to 500 points). 

Future works will be dedicated to the extension of the results to the independent joint probability constraint case. Moreover, we think that this inequality can be helpful in many concrete applications to estimate more precisely error bounds, especially we will focus on electricity bill estimates.

\printbibliography[heading=bibintoc]

\appendix
\section{Proofs}

\subsection{Comparison of $\mathbb{E}[e^{t X}]$ estimations from the litterature}\label{sec::comparison_moment_generating}
\paragraph{\cite[(c)]{Bennett_1962}$\Rightarrow$\cite[(b)]{Bennett_1962}.}
Suppose that $X - \bbE[X] \leq b$ and $\Var(X)\leq \sigma^2$.
Then, in~\cite{Jebara_2018}, the upper estimator of the moment-generating function is 
$J(t) = 1+\gamma \left(e^{tb}-1-tb\right)$,
where $\gamma = (\sigma / b)^2$.
Besides, in~\cite{Dembo_2010}, the upper estimator is
$$
D(t) := \frac{\gamma e^{t b} 
+ e^{-t\gamma b}} {1+\gamma}\enspace.
$$
If now we consider $D$ as a function of $\gamma$, i.e., $D(t,\gamma) = D(t)$, then, the second partial derivative w.r.t. $\gamma$ is 
$\partial^2_\gamma D(t,\gamma) = \frac{2}{(1+\gamma)^3}\left[e^{_\gamma t} - e^t\right]\leq 0$.
Therefore, $D(t,\cdot)$ is concave for any fixed $t\geq 0$ and 
$$D(t,\gamma)\leq D(t,0) + \gamma \partial_\gamma D(t,0) = J(t)\enspace.$$

\paragraph{\cite[(c)]{Bennett_1962}$\Rightarrow$\cite{Pinter_1989}$\Rightarrow$\cite{Hoeffding_1963}.} 
As $\gamma\mapsto D(t,\gamma)$ is increasing, then $D(t,\gamma)\geq D(t,1)=\cosh(tb)$, which is exactly the bound obtained by Pinter with $a=b$.
As $\cosh(x)\leq \exp(x^2/2)$, we have $D(t,1)\leq e^{(tb)^2/2}$, which is exactly the Hoeffding's estimator.

\paragraph{\cite{From_2013,Zheng_2017}$\Rightarrow$\cite{Hoeffding_1963}.}
Now, until the end of the proof, let us suppose that $X\in[0,1]$, i.e., $a = -\bbE[X]$ and $b = 1-\bbE[X]$.
We denote by $p = \bbE[X]$ the mean value and by $\sigma^2$ the variance.
Then, in~\cite{Hoeffding_1963}, the upper estimator of the moment-generating function is 
$\displaystyle H(t) = e^{tp + t^2/8}\enspace.$
In~\cite{Zheng_2017} and~\cite{From_2013}, the upper estimator of the moment-generating function $
\bbE[e^{t(X-p)}]$ is 
$Z(t):= 1+p(e^t -1)$.
By basic algebra, 
$H'(t)-Z'(t) = \left(p+\frac{t}{4}\right) e^{tp + t^2/8} - pe^t\enspace.$
Then 
$$H'(t)-Z'(t) \geq 0 \iff \ln\left(1+\frac{t}{4p}\right)+t(p-1)+t^2/8\geq 0\enspace.$$
As $\ln(1+x)\geq \frac{x}{1+\tfrac{1}{2}x}$ for $x\geq 0$,
$H'(t) - Z'(t)\geq 0 $ if 
$$\left[\frac{1}{4p}+p-1\right] + t\left[\frac{1}{8}+\frac{p-1}{8p}\right]+t^2\left[\frac{1}{8^2p}\right]\geq 0\enspace.$$
The above condition holds since the discriminant of this second-order equation is zero. Therefore, $H'(t)-Z'(t)\geq 0$, and since $H(0) = Z(0)$, we finally conclude that $H(t)\geq Z(t)$ for $t>0$.

\paragraph{\cite{Cheng_2022}$\Rightarrow$\cite{From_2013,Zheng_2017}.}
In~\cite{Cheng_2022}, the upper estimator is a family of function $C_k$ such that
$$
C_k(t):= 1+k\left(e^{t/k}-1\right)(p - \sigma^2 - p^2) + (\sigma^2+p^2)(e^t - 1)\enspace,
$$
One can prove that $\{C_k(t)\}_k$ is decreasing $\forall t\in \bbR_+$, and
$$
\lim_{k\to\infty} C_k(t) = C_\infty(t) := 1+t(p - q) + q(e^t - 1)\enspace,$$
where $q := \sigma^2+p^2\leq p$. Hence, 
$C_\infty(t) - Z(t) = (p-q)\left(1+t - e^t\right)\leq 0\enspace.$

\paragraph{\cite[(b)]{Bennett_1962}$\Rightarrow$\cite{Cheng_2022}.} 
For $X\in[0,1]$, the upper estimator of~\cite{Jebara_2018} is 
$J(t) = 1+\gamma \left(e^{t(1-p)}-1-t(1-p)\right)$.
Using the notation $\theta = (1-p)^2 \in[0,1]$, we express $C_\infty$ and $J$ in $(\theta,\gamma)$-coordinates:
$$
\begin{cases}
C_\infty(t,\gamma,\theta) = 1+t(1\shortminus\sqrt{\theta})+\left[\gamma \theta + (1\shortminus\sqrt{\theta})^2\right]\left[e^t-1-t\right]\\
J(t,\gamma,\theta) = 1+\gamma \left(e^{t\sqrt{\theta}} - 1 - t\sqrt{\theta}\right)
\end{cases}
$$
Now, the partial derivatives w.r.t $\gamma$ are
$$
\begin{cases}
\partial_\gamma C_\infty(t,\gamma,\theta) = \theta\left[e^t-1-t\right]\\
\partial_\gamma J(t,\gamma,\theta) = e^{t\sqrt{\theta}} - 1 - t\sqrt{\theta}
\end{cases}
$$
The function $[0,1]\ni\theta\mapsto e^{t\sqrt{\theta}} - 1 - t\sqrt{\theta}$ is convex for any $t\geq 0$, and so $\partial_\gamma J\leq\partial_\gamma C_\infty$. As $J(t,0,\theta) = 1 \leq C_\infty(t,0,\theta)$, we conclude that 
$J(t)\leq C_\infty(t)\enspace.$



\section{Conic reformulation}\label{app::conic_reformulation}
First, $\Psi_{\gamma,b}$ can also be written 
$\Psi_{\gamma,b}(y,z) = \max\left\{\Psi^+_{\gamma,b}(y,z), \Psi^+_{\gamma^{\shortminus 1},b\gamma}(y,z)\right\}$. Therefore,
\begin{equation*}
\sum_{k=1}^N \Psi_{\gamma_k,b_k}(y_k,z)\leq u
\iff \begin{cases}
\sum_{k=1}^N v_k\leq u\\
\Psi^+_{\gamma_k,b_k}(y_k,z)\leq v_k\\
\Psi^+_{\gamma^{\shortminus 1}_k,b_k\gamma_k}(y_k,z)\leq v_k
\end{cases}
\end{equation*}
Now, denoting the exponential cone by $\mathcal{K}_{exp} = \{(x,1,x_2,x_3): x_1\geq x_2 e^{x_3/x_2}\}$,
$$
\begin{aligned}
\Psi^+_{\gamma_k,b_k}(y_k,z)\leq v_k
\iff & \gamma_k e^{\frac{y_k}{z} b_k}+ e^{-\frac{y_k}{z} b_k\gamma_k} \leq e^{\frac{v_k}{z}}(1+\gamma_k)\\
\iff & \gamma_k e^{\frac{y_k b_k - v_k}{z} b_k}+ e^{\frac{-y_k b_k\gamma_k - v_k}{z} } \leq 1 +\gamma_k\\
\iff & \begin{cases}
\gamma_k \eta_k + \nu_k \leq (1+\gamma_k)z\\
(\eta_k,z,y_k b_k-v_k)\in\mathcal{K}_{exp}\\
(\nu_k,z,-y_k b_k \gamma_k -v_k)\in\mathcal{K}_{exp}\\
\end{cases}
\end{aligned}
$$

This formulation has a number of variables and (conic) constrains of order $O(NM)$. Therefore, this conic reformulation is only valuable for small to medium instance sizes.

\end{document}